\newcommand{\R}{{\mathbb R}}
\newcommand{\Z}{{\mathbb Z}}
\newcommand{\C}{{\mathbb C}}
 \renewcommand{\geq }{\geqslant}
 \renewcommand{\leq }{\leqslant}
\DeclarePairedDelimiter{\abs}{\lvert}{\rvert}
\DeclarePairedDelimiter{\norma}{\lVert}{\rVert}
\newcommand{\Rn}{{\mathbb R^n}}
\newcommand{\ignora}[1]{}
\lbrace\begin{array}{@{}l@{}}}%
\numberwithin{equation}{section}
\newtheorem{theorem}{Theorem}[section]
\newtheorem{corollary}[theorem]{Corollary}
\newtheorem{lemma}[theorem]{Lemma}
\newtheorem{proposition}[theorem]{Proposition}
\theoremstyle{definition} 
\newtheorem{example}[theorem]{Example}
\newtheorem{remark}[theorem]{Remark}
\begin{document}

\title[Harmonic Oscillators and related models: Gaussian decay]{Gaussian decay of Harmonic Oscillators and related models}

\author{B.~Cassano}
\address{Biagio Cassano: 
BCAM - Basque Center for Applied Mathematics, 
Alameda Mazarredo 14, 48009 Bilbao, Basque Country - Spain.}
\email{bcassano@bcamath.org}

\author{L.~Fanelli}
\address{Luca Fanelli: SAPIENZA Universit$\grave{\text{a}}$ di Roma, Dipartimento di Matematica, P.le A. Moro 5, 00185 Roma, Italy.}
\email{fanelli@mat.uniroma1.it}

\subjclass[2010]{35J10, 35B99.}
\keywords{Schr\"odinger equation, uniform electric potentials, uniform magnetic potentials,
  harmonic oscillator, unique continuation, uncertainty principle}

\thanks{
The two authors were supported by the Italian project FIRB 2012 {\it Dispersive Dynamics: Fourier Analysis and Calculus of Variations}.
The first author was also supported by ERCEA Advanced Grant 2014 669689 - HADE, by the MINECO project MTM2014-53850-P, by Basque Government project IT-641-13 and also by the Basque Government through the BERC 2014-2017 program and by Spanish Ministry of Economy and Competitiveness MINECO: BCAM Severo Ochoa excellence accreditation SEV-2013-0323.}

\begin{abstract}
We prove that the decay of the eigenfunctions of harmonic oscillators, uniform electric or magnetic fields is not stable under 0-order complex perturbations, even if bounded, of these Hamiltonians, in the sense that we can produce solutions to the evolutionary Schr\"odinger flows associated to the Hamiltonians, with a stronger Gaussian decay at two distinct times. We then characterize, in a quantitative way, the sharpest possible Gaussian decay of solutions as a function of the oscillation frequency or the strength of the field, depending on the Hamiltonian which is considered. This is connected to the Hardy's Uncertainty Principle for free Schr\"odinger evolutions.
\end{abstract}

\date{\today}
\maketitle

\section{Introduction}

Let us consider an electromagnetic  Schr\"odinger Hamiltonian of the form
\begin{equation*}
  H=-\Delta_A+V(x),
\end{equation*}
where $\Delta_A:=(\nabla-iA)^2$ and the potentials $A,V$ are given by
\begin{equation*}
  A:\R^n\to\R^n,
  \qquad
  V:\R^n\to\R.
\end{equation*}
We assume that $H$ can be defined as a self-adjoint operator on a suitable subset $X\subset L^2(\R^n)$, so that the Schr\"odinger flow $e^{itH}$ is well-defined by functional calculus. Moreover, we assume that $H$ has pure point spectrum, and its eigenvalues form an orthonormal basis of $L^2(\R^n)$. This is a typical situation, if unbounded (at infinity) perturbations are involved, like harmonic oscillators or uniform electric or magnetic fields, as we see in the sequel. In this framework, we have a countable set of standing-waves of the form $e^{itH}\psi_k=e^{i\lambda_kt}\psi_k$, being $\lambda_k$ an eigenvalue and $\psi_k$ a corresponding eigenfunction of $H$. The space-decay at infinity of these objects is, independently of time, the one of the eigenfunctions $\psi_k$, which is, in most cases, exponential. The two most relevant models are the following ones.

\begin{example}[Quantum harmonic oscillator]
Consider the 1D-equation
\begin{equation}\label{eq:QHOexample}
  i\partial_t u - \partial_{xx} u + \frac{\omega^2 x^2}{4} u =0.
\end{equation}

It is well known (see e.g. Chapter 7 in \cite{thaller}) 
that the Hamiltonian
\begin{equation}\label{eq:defnHomega}
  H_\omega u = - \partial_{xx} u + \frac{\omega^2 x^2}{4} u 
\end{equation}
has a pure point spectrum 
\begin{equation*}
\sigma(H_\omega)=\sigma_{pp}(H_\omega)=
\left\{E_m = \omega\left(m + \frac12\right) \colon m=0,1,\dots\right\}
\end{equation*}
with eigenfunctions
\begin{equation*}
    \psi_m(x) = 
    h_m \,
    H_m\left(\sqrt{\frac{\omega}{2}} x \right)
    \,
    e^{- \frac{\omega x^2}{4}} , \qquad m = 0,1,2,\ldots,
\end{equation*}
where $h_m>0$ are normalization constants and the functions $H_m$ are the \emph{Hermite polynomials}
\begin{equation*}\label{eq:defnHermite}
 	H_m(x)=(-1)^m e^{x^2}\frac{d^m}{dx^m}\left(e^{-x^2}\right).
\end{equation*}
\end{example}
\begin{example}[Uniform magnetic field]\label{ex:UM}
Consider the 2D-equation
\begin{equation}\label{eq:USchrodingereq}
  i \partial_t u - \left(\nabla - i A \right)^2 u =0   , \quad
  A(x_1,x_2)=\frac{b}{2}(-x_2,x_1), \quad b\neq 0.
\end{equation}
One can easily see (see e.g. Chapter 8 of \cite{thaller}) that the Hamiltonian 
$H(A)=- \left(\nabla - i A \right)^2$ 
has pure point spectrum
\begin{equation*}
  \sigma(H(A))=\sigma_{pp}(H(A))=\left\{F_k=\abs{b}\left(k+\frac12\right)\colon
    k=0,1,\dots\right\}.
\end{equation*}
For $k=m+(\abs{l}-(\text{sgn} \,b )l)/2$, the eigenvalues 
(\emph{Landau Levels}) are associated to the eigenfunctions
\begin{equation}\label{eq:autofunzioniUM}
  \phi_{m,l}(x)=
  p_{m,l} r^{\abs{l}} \binom{m+\abs{l}}{m}^{-1}
          L^{(\abs{l})}_m\left(\frac{\abs{b}}{2}r^2\right)
          e^{il\varphi}
          e^{-\frac{\abs{b}r^2}{4}}, \quad l \in \Z, m=0,1,\dots,
\end{equation}
with $x=r\varphi\in\R^n$, for $r>0$, $\varphi \in \mathbf S^{n-1}$;  $p_{m,l}>0$ are 
normalization constants and $L^{(\alpha)}_m$ are the 
\emph{generalized Laguerre Polynomials}
\begin{equation*}
L_m^{(\alpha)}(x) = {x^{-\alpha} e^x \over m!}{d^m \over dx^m} \left(e^{-x} x^{m+\alpha}\right).
\end{equation*}
\end{example}
In both the previous cases, the eigenfunctions have $L^2$--gaussian decay, namely
\begin{equation*}
  \norma*{ e^{|\cdot|^2/\alpha^2} 
  \psi_n(\cdot)}_{L^2} 
  +
  \norma*{ e^{|\cdot|^2/\beta^2} 
  \phi_{n,l}(\cdot)}_{L^2}<+\infty,
  \qquad l \in \Z, m = 0,1,2,\ldots,
\end{equation*}
for all $\alpha>2/\sqrt{\omega}$ and $\beta>2/\sqrt{b}$ respectively. Therefore, the corresponding standing-wave solutions
\begin{equation*}
  u(x,t)=e^{i E_m t}\psi_m(x), \quad
  v(x,t)=e^{i F_m t}\phi_{m,l}(x),
\end{equation*}
to \eqref{eq:QHOexample}
and \eqref{eq:USchrodingereq}, respectively, satisfy
\begin{equation*}
  \begin{split}
  &\norma*{ 
     e^{|\cdot|^2/\alpha^2} 
    u(\cdot,0)
    }_{L^2}
+
  \norma*{ 
    e^{|\cdot|^2/\alpha^2} 
    u(\cdot,1)
    }_{L^2}
    <+\infty, 
\\
  &\norma*{ 
    e^{|\cdot|^2/\beta^2} 
    v(\cdot,0)
}_{L^2}
+
  \norma*{ 
    e^{|\cdot|^2/\beta^2} 
    v(\cdot,1)
}_{L^2}
<+\infty,
\end{split}
\end{equation*}
provided
\begin{equation}\label{eq:condsbag}
\alpha^2 > \frac4\omega,
\qquad
 \beta^2 > \frac4b.
\end{equation}

The first question of this manuscript is whether it is possible to obtain solutions with stronger gaussian decay at two distinct times than the one given by \eqref{eq:condsbag}, by perturbing \eqref{eq:QHOexample}
and \eqref{eq:USchrodingereq} with a zero-order term $V(x,t)\in L^\infty(\R^n\times\R)$. The answer is, not surprisingly, positive if we allow complex perturbations, as we show in the following theorems.

\begin{theorem}\label{thm:harmex}
\label{prop:QHOcounterexample}
Define the following function
\begin{equation}\label{eq:QHOexampleSharp}
  \begin{split}
    u(x,t)=&(\cos{\omega t})^{-\frac{n}{2}}
           (1+ih \tan{\omega t})^{2k-\frac{n}{2}}
           \left(1+\frac{h\omega \abs{x}^2}{\cos^2{\omega t}}\right)^{-k}\\
           &\times\exp\left[-\frac{h\omega\abs{x}^2}{4(\cos^2{\omega t}+h^2\sin^2{\omega t})}
                     + i \tan{\omega t}
                         \left(1-h^2\frac{1+\tan^2{\omega t}}{1+h^2\tan^2{\omega t}}\right)\right]
           \end{split}
\end{equation}
with $k > n/2$ and
\begin{equation*}
  h=\frac{2\pm\sqrt3}{\tan{\omega/2}}.
\end{equation*}
Then $u\in \mathcal C\left(\left[-\frac{1}{2},\frac{1}{2}\right];L^2(\R^n)\right)$ 
and it is solution to
\begin{equation*}
  i \partial_t u -\Delta u + V(x,t) u + \frac{\omega^2\abs{x}^2}{4}u=0,
\end{equation*}
with 
\begin{equation}\label{eq:definizioneVesempio}
  V(x,t)=\frac{2k}{\frac{\cos^2{\omega t}}{h\omega}+\abs{x}^2}
         \left[
           \frac{1}{1+ih\tan{\omega t}}+n
           -\frac{2(1+k)\abs{x}^2}{\frac{\cos^2{\omega t}}{h\omega}+\abs{x}^2}
         \right].
\end{equation}
The function $u$ satisfies
\begin{equation*}
  \norma*{e^{\abs{\cdot}^2/\tilde \alpha^2}u\left(\cdot,-\frac{1}{2}\right)}_{L^2}=
  \norma*{e^{\abs{\cdot}^2/\tilde \alpha^2}u\left(\cdot,\frac{1}{2}\right)}_{L^2} 
  < +\infty
\end{equation*}
for some $\widetilde\alpha \in \R$ such that $\widetilde\alpha^2=\frac{4\sin{\omega}}{\omega}$.
\end{theorem}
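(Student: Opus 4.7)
The plan is to verify each of the three claims — the regularity $u \in \mathcal{C}([-1/2,1/2]; L^2(\R^n))$, the fact that $u$ solves the perturbed Schr\"odinger equation with $V$ given by \eqref{eq:definizioneVesempio}, and the identification of $\widetilde{\alpha}^2 = 4\sin\omega/\omega$ — by direct computation on the explicit ansatz \eqref{eq:QHOexampleSharp}. The guiding principle is that the ``Gaussian-plus-phase'' part of $u$ (namely $u$ with the polynomial factor $(1+h\omega|x|^2/\cos^2\omega t)^{-k}$ and the compensating scalar $(1+ih\tan\omega t)^{2k}$ removed) is a complex Gaussian wavepacket solving the \emph{unperturbed} harmonic oscillator equation: its $|x|^2$-coefficient $b(t)$ satisfies the Riccati equation $\dot b = i(4b^2 - \omega^2/4)$ with $b(0)=h\omega/4$, whose explicit solution $b(t)=\omega[h+i(h^2-1)\sin\omega t\cos\omega t]/[4(\cos^2\omega t + h^2\sin^2\omega t)]$ reproduces exactly the exponent of \eqref{eq:QHOexampleSharp}; the role of $V$ is then to compensate for the residual action of $-\Delta$ on the polynomial factor.

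For the regularity, assuming $\omega \in (0,\pi)$ so that $\cos\omega t$ stays bounded away from zero on $[-1/2,1/2]$, the choice $h = (2\pm\sqrt 3)/\tan(\omega/2)$ is a nonzero real number, and all denominators in \eqref{eq:QHOexampleSharp} are smooth and bounded away from zero; the real part of the $|x|^2$-coefficient in the exponent is negative, giving Gaussian decay uniformly in $t$, while the polynomial factor $(1+h\omega|x|^2/\cos^2\omega t)^{-k} \sim |x|^{-2k}$ with $k>n/2$ alone already ensures $L^2$-integrability. Continuity in the $L^2$-topology then follows from pointwise continuity of $u$ in $t$ together with dominated convergence using the uniform Gaussian majorant.

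For the equation, I would write $u = e^{\Phi}$ with $\Phi$ the sum of logarithms of the time-dependent prefactor, the polynomial factor, and the Gaussian/phase exponent, and use the identity
\begin{equation*}
\frac{i\partial_t u - \Delta u + \tfrac{\omega^2}{4}|x|^2 u}{u} = i\partial_t\Phi - \Delta\Phi - |\nabla\Phi|^2 + \tfrac{\omega^2}{4}|x|^2,
\end{equation*}
matching term by term. The time-dependent prefactor contributes only to $i\partial_t\Phi$ and produces the $n/2$-terms together with the $(1+ih\tan\omega t)^{-1}$-term of \eqref{eq:definizioneVesempio}; the Gaussian/phase exponent contributes a quadratic in $|x|^2$ whose Riccati balance exactly cancels $\tfrac{\omega^2}{4}|x|^2$; and $(1+h\omega|x|^2/\cos^2\omega t)^{-k}$ contributes, through $\Delta\Phi$, $|\nabla\Phi|^2$, and the cross terms with the Gaussian gradient, precisely the $n$-term and the quadratic $|x|^2$-term in \eqref{eq:definizioneVesempio}. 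The identification is most transparent after introducing $\rho := \cos^2\omega t/(h\omega) + |x|^2$, in terms of which \eqref{eq:definizioneVesempio} reads $V = (2k/\rho)\bigl[(1+ih\tan\omega t)^{-1}+n-2(1+k)|x|^2/\rho\bigr]$.

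Finally, for the weighted norm at $t=\pm 1/2$, the relation $h\tan(\omega/2)=2\pm\sqrt 3$ yields $h^2\sin^2(\omega/2)=(2\pm\sqrt 3)^2\cos^2(\omega/2)=(7\pm 4\sqrt 3)\cos^2(\omega/2)$, hence $\cos^2(\omega/2)+h^2\sin^2(\omega/2)=4(2\pm\sqrt 3)\cos^2(\omega/2)$ and $h\omega/(\cos^2(\omega/2)+h^2\sin^2(\omega/2))=\omega/(2\sin\omega)$. The Gaussian factor in $|u(\cdot,\pm 1/2)|^2$ therefore has exponent proportional to $-|x|^2/\widetilde\alpha^2$ with $\widetilde\alpha^2=4\sin\omega/\omega$, and multiplying by $e^{2|x|^2/\widetilde\alpha^2}$ cancels (or at worst controls) it, leaving only the polynomial factor $\sim|x|^{-4k}$, integrable on $\R^n$ since $k>n/2$. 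The equality of the two weighted norms at $t=\pm 1/2$ is immediate from the evenness of $\cos^2\omega t$ and $\sin^2\omega t$. The main obstacle will be the algebraic bookkeeping in the PDE verification — tracking the delicate Riccati-type cancellations between the Gaussian phase, the polynomial corrector, and the harmonic potential, and recognizing the residual as exactly the bracketed expression in \eqref{eq:definizioneVesempio}.
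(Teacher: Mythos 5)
Your overall plan (direct verification: Riccati structure for the Gaussian factor, logarithmic-derivative identity for the PDE, explicit trigonometry at $t=\pm\tfrac12$) is the natural one. The paper contains no written proof of this statement: the example is implicitly obtained by pulling back the sharp free-evolution example of Escauriaza--Kenig--Ponce--Vega (Theorem 2 in \cite{EKPV3}) through the pseudoconformal change of variables of Section 4, under which $\norma*{e^{\abs{\cdot}^2/\tilde\alpha^2}u(\cdot,\pm\tfrac12)}_{L^2}=\norma*{e^{\cos^2(\omega/2)\abs{\cdot}^2/\tilde\alpha^2}\varphi\left(\cdot,\pm\tan(\omega/2)/\omega\right)}_{L^2}$, so the PDE and the weighted bounds are inherited from the free case; your from-scratch verification is a legitimate substitute, and your Riccati computation of $b(t)$ is the correct reading of the exponent (the printed phase is missing a factor $\omega\abs{x}^2/4$, which you implicitly restore). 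The regularity argument and the $u=e^{\Phi}$ outline are fine modulo bookkeeping.

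The genuine gap is your last step, which is precisely the delicate point of the statement. Finiteness of $\norma*{e^{\abs{\cdot}^2/\tilde\alpha^2}u(\cdot,\pm\tfrac12)}_{L^2}$ requires $\abs{u(x,\pm\tfrac12)}\lesssim \abs{x}^{-2k}e^{-\abs{x}^2/\tilde\alpha^2}$, i.e. $\operatorname{Re}b(\pm\tfrac12)\ge 1/\tilde\alpha^2=\omega/(4\sin\omega)$. Your own trigonometric identity gives, with $z:=h\tan(\omega/2)=2\pm\sqrt3$ (so $1+z^2=4z$),
\begin{equation*}
\operatorname{Re}b\left(\pm\tfrac12\right)=\frac{h\omega}{4\left(\cos^2(\omega/2)+h^2\sin^2(\omega/2)\right)}
=\frac{\omega}{2\sin\omega}\,\frac{z}{1+z^2}=\frac{\omega}{8\sin\omega},
\end{equation*}
only half of what is needed. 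Consequently $e^{2\abs{x}^2/\tilde\alpha^2}\abs{u(x,\pm\tfrac12)}^2\sim\abs{x}^{-4k}e^{+\abs{x}^2/\tilde\alpha^2}$, which is not integrable: the weight does not ``cancel (or at worst control)'' the Gaussian factor of $\abs{u}^2$; it exceeds it by exactly that factor. Note that $\frac{\omega}{2\sin\omega}\frac{z}{1+z^2}\le\frac{\omega}{4\sin\omega}$ for all $z>0$, with equality only for $z=1$, i.e. $h=\cot(\omega/2)$; only that choice makes the endpoint decay match $e^{-\abs{x}^2/\tilde\alpha^2}$ with $\tilde\alpha^2=4\sin\omega/\omega$ and saturate the threshold of Theorem \ref{thm:QHOtheorem} (consistently, it corresponds to the saturating datum of the free problem on the pulled-back interval $[-\tan(\omega/2)/\omega,\tan(\omega/2)/\omega]$). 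So either you work with $h\tan(\omega/2)=1$, explicitly flagging the mismatch with the printed value $2\pm\sqrt3$, or the weighted-norm claim as you argue it fails; as written, your proposal hides this factor-of-two discrepancy behind an incorrect cancellation rather than resolving it.
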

The analogous result for \eqref{eq:USchrodingereq} is as follows.
\begin{theorem}\label{prop:UMcounterexample}
For $\omega:=b$, $n=2$, $k>4$, the function $u$ in \eqref{eq:QHOexampleSharp} is solution to
\begin{equation*}
  i \partial_t u - (\nabla - i A(x))^2 u + V(x,t) u = 0,
\end{equation*}
with $V$ defined in \eqref{eq:definizioneVesempio}
and we have that 
\begin{equation*}
  \norma*{e^{\abs{\cdot}^2/\tilde \alpha^2}u\left(\cdot,-\frac{1}{2}\right)}_{L^2}=
  \norma*{e^{\abs{\cdot}^2/\tilde \alpha^2}u\left(\cdot,\frac{1}{2}\right)}_{L^2} 
  < +\infty
\end{equation*}
for some $\tilde\alpha \in \R$ such that $\widetilde\alpha^2=\frac{4\sin{b}}{b}$.
\end{theorem}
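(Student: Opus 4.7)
The plan is to reduce the magnetic case to the harmonic-oscillator case already handled in Theorem~\ref{thm:harmex}. The decisive observation to exploit is that the candidate solution $u$ in \eqref{eq:QHOexampleSharp} is \emph{radially symmetric} in $x$: every factor depends on the space variable only through $|x|^2$.

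First I would compute the magnetic Laplacian in the symmetric gauge $A(x_1,x_2)=\frac{b}{2}(-x_2,x_1)$. Since $\dive A = 0$ and $|A|^2 = b^2|x|^2/4$, expanding the square yields
\begin{equation*}
-(\nabla - iA)^2 = -\Delta + \frac{b^2 |x|^2}{4} + ib\bigl(x_1\partial_{x_2}-x_2\partial_{x_1}\bigr).
\end{equation*}
The operator in parentheses is (up to a constant) the planar angular momentum, and it annihilates every radial function. Applied to our $u$, this identity reduces to
\begin{equation*}
-(\nabla-iA)^2 u = -\Delta u + \frac{b^2|x|^2}{4} u.
\end{equation*}

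With the identification $\omega = b$, the PDE to be checked therefore collapses to
\begin{equation*}
i\partial_t u - \Delta u + V(x,t) u + \frac{\omega^2 |x|^2}{4} u = 0,
\end{equation*}
which is exactly the equation satisfied by $u$ in dimension $n=2$ by Theorem~\ref{thm:harmex} (the constraint $k>n/2=1$ required there is automatic from the standing hypothesis $k>4$). The $L^2$--Gaussian bound at $t=\pm\tfrac12$ with $\widetilde\alpha^2 = 4\sin b/b$ then transfers verbatim from Theorem~\ref{thm:harmex} with $\omega=b$.

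I do not foresee any genuine obstacle: the statement is essentially a corollary of Theorem~\ref{thm:harmex}, modulo the gauge computation of $-(\nabla-iA)^2$ and the remark that the angular-momentum correction vanishes on radial profiles. The only item requiring a small amount of care is verifying from the explicit formula \eqref{eq:QHOexampleSharp} that each factor is genuinely a function of $|x|^2$ and $t$ alone, which is immediate by inspection; the enhanced assumption $k>4$ (rather than $k>1$) is not needed for the reduction itself and must be there to ensure additional regularity/integrability implicit in the statement.
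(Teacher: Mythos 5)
Your reduction is exactly the paper's intended argument: as signaled in Remark~\ref{rem:conne}, $-(\nabla-iA)^2=H_b-\frac{b}{2}L$ in the symmetric gauge, and since $u$ in \eqref{eq:QHOexampleSharp} depends on $x$ only through $\abs{x}^2$, the angular momentum term vanishes, so the claim follows from Theorem~\ref{thm:harmex} with $\omega=b$, $n=2$ (the hypothesis $k>4$ being stronger than the $k>n/2$ needed there). The proposal is correct and essentially identical to the paper's route.
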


\begin{remark}
Notice that the conditions $\widetilde\alpha^2=\frac{4\sin{\omega}}{\omega}$ and $\widetilde\alpha^2=\frac{4\sin{b}}{b}$ in the previous results provide a stronger decay than the one in \eqref{eq:condsbag}.
The potential $V$ in \eqref{eq:definizioneVesempio} is complex-valued, and this is quite likely necessary in order to produce examples with such a decay property. Indeed, this kind of question has a stationary counterpart with a well known manifestation in the examples by Meshkov \cite{M} and Cruz-Sanpedro \cite{C}.
Also in that case, the complex nature of the perturbation is essential, as it has been recently proved, at least in part, by Kenig, Silvestre
and Wang in \cite{KSW} 
and extended to more general elliptic operators by Davey, Kenig and Wang in \cite{DKW}.
\end{remark}

\begin{remark}\label{rem:conne}
Equations \eqref{eq:QHOexample} and \eqref{eq:USchrodingereq} 
  are strictly connected to each other: indeed observe that
  \begin{equation*}
    H(A)= - (\nabla - i A)^2 
        = - \Delta  + \left(\frac{b}2 \right)^2\abs{x}^2
          + i \frac{b}2 (-x_2,x_1)\cdot \nabla 
        = H_{b} - \frac{b}2 L
  \end{equation*}
  where $H_{b}$ is defined in \eqref{eq:defnHomega} and $L:=-i(x_1,x_2)\wedge\nabla =-i(-x_2,x_1)\cdot \nabla$ is
  the \emph{angular momentum operator}. Since $[H_{b},L]=0$, we have
  \begin{equation*}
    e^{iH(A)t}=e^{iH_{b}t} e^{-i\frac{b}2 L t}
  \end{equation*}
  and we notice that $L$ is the generator of rotations: for $\varphi >0$ 
  \begin{equation*}
    e^{-iL\varphi}\psi(x)=\psi(R(-\varphi)x)
  \end{equation*}
  where $R(\varphi)$ is the counterclockwise rotation in $\R^2$ of an angle $\varphi$:
  \begin{equation*}
    R(\varphi)=
    \begin{bmatrix}
      \cos\varphi & -\sin\varphi \\
      \sin\varphi & \cos\varphi
    \end{bmatrix}.
  \end{equation*}
  Therefore the evolution according to equation \eqref{eq:USchrodingereq} 
  is the composition between a rotation and a harmonic oscillator flow.
  Finally notice that, since $H_{b}$ and $L$ commute, one can find a common 
  base of eigenfunctions. Indeed $\phi_{m,l}$ in \eqref{eq:autofunzioniUM}
  verify $H_{b} \phi_{m,l} = \abs{b}\left(m + \abs{l}/2 + 1/2\right) \phi_{m,l}$ 
  and $L \phi_{m,l} = l \phi_{m,l}$, for $m=0,1,\dots$ and $l \in \Z$.
\end{remark}
Our next step is to prove that Theorems \ref{thm:harmex} and \ref{prop:UMcounterexample} are sharp: namely, that non-trivial solutions with stronger Gaussian decay than the one in \eqref{eq:QHOexampleSharp}, at two distinct times, cannot exist. This follows a program which has been developed in the magnetic free case $A\equiv 0$ without harmonic oscillators
by Escauriaza, Kenig, Ponce, and Vega
in the sequel of papers \cite{EKPV0,EKPV1,EKPV2,EKPV3,EKPV4},
and  with Cowling in \cite{CEKPV}
and continued in the magnetic case $A \not\equiv 0$ in \cite{BFGRV} and in \cite{CF}.
In all these references, the motivation is given by the Hardy Uncertainty Principle and its connection with Schr\"odinger evolutions:

{\it if $f(x)=O\left(e^{-|x|^2/\beta^2}\right)$ and its Fourier transform
$\hat f(\xi)=O\left(e^{-4|\xi|^2/\alpha^2}\right)$, then}
\begin{align*}
  \alpha\beta<4
  &
  \Rightarrow
  f\equiv0
  \\
  \alpha\beta=4
  &
  \Rightarrow
  f\ is\ a \ constant\ multiple\ of\ e^{-\frac{|x|^2}{\beta^2}}.
\end{align*}
The evolutionary version of the previous is the following:

{\it if $u(x,0)=O\left(e^{-|x|^2/\beta^2}\right)$ and $u(x,T):=e^{iT\Delta}u(x,0)=
O\left(e^{-|x|^2/\alpha^2}\right)$, then}
\begin{align*}
  \alpha\beta<4T
  &
  \Rightarrow
  u\equiv0
  \\
  \alpha\beta=4T
  &
  \Rightarrow
  u(x,0)\ is\ a \ constant\ multiple\ of\ e^{-\left(\frac{1}{\beta^2}+\frac{i}{4T}\right)|x|^2}.
\end{align*}
An $L^2$-versions of the previous results (see \cite{SST}) is also available:
\begin{align*}
  e^{|x|^2/\beta^2}f \in L^2,\  e^{4|\xi|^2/\alpha^2}\hat f \in L^2,\
  \alpha\beta\leq4
  &
  \Rightarrow
  f\equiv0
  \\
  e^{|x|^2/\beta^2}u(x,0)\in L^2,\  e^{|x|^2/\alpha^2}e^{iT\Delta}u(x,0) \in L^2,\
  \alpha\beta\leq4T
  &
  \Rightarrow
  u\equiv0.
\end{align*}
We mention \cite{BD, FS, SS} as interesting surveys about this topic.
In \cite{CEKPV,EKPV0,EKPV1,EKPV2,EKPV3,EKPV4}, the authors investigated the validity of the previous statements for zero-order perturbations of the Schr\"odinger equation of the form 
\begin{equation*}\label{3eq:main2}
  \partial_t u = i (\Delta + V(t,x))u.
\end{equation*}
We can briefly summarize their strongest results: if $V(t,x)\in L^\infty$ is the sum of a real-valued potential $V_1$ and a sufficiently decaying complex-valued potential $V_2$, and $\norma{e^{\abs{x}^2/\beta^2}u(0)}_{L^2}
+\norma{e^{\abs{x}^2/\alpha^2}u(T)}_{L^2}<+\infty$, with $\alpha\beta<4T$, then $u\equiv0$. Moreover, the result is sharp: indeed, Theorem 2 in \cite{EKPV3} provides an example of a (complex) potential $V$ for which there exists a non-trivial solution $u\neq0$ with the above gaussian decay properties, with $\alpha\beta=4T$.

In \cite{BFGRV, CF}, the authors consider magnetic perturbations and study
the validity of the previous statements. Some geometric restrictions on the magnetic field $B=DA-DA^t$ naturally arises in the problem.

Our theorems in the sequel are in the same style as the above mentioned ones and complete the picture of the examples in Theorems \ref{thm:harmex} and \ref{prop:UMcounterexample}, proving their sharpness. We first introduce the assumptions on the potentials which will be involved in the next statements.

\subsubsection*{\textbf{(HE)} Assumptions on $V$}
  Let $V=V_1+V_2$, with  
  \begin{equation*}
    V_1=V_1(x):\R^n\to\R,
    \qquad
    V_2=V_2(x,t):\R^{n+1}\to\C,
  \end{equation*}
   and assume that
  \begin{align*}
     & \|V_1\|_{L^\infty} <\infty \\ 
     & \sup_{t\in[0,T]}\left\|e^{\frac{T^2|\cdot|^2}{(\alpha t+\beta
          (T-t))^2}}V_2(\cdot,t)\right\|_{L^\infty}
    e^{\sup_{t\in[0,T]}\left\|\Im V_2(\cdot,t)\right\| _{L^\infty}}
    <\infty.
  \end{align*}
\subsubsection*{\textbf{(HM)} Assumptions on $A$}
Let
$A=(A^1(x),\dots,A^n(x))\in \mathcal C^{1,\varepsilon}_{\text{loc}}(\R^n;\R^n)$.
   Denote by $B=B(x)=DA-DA^t$, $B_{jk}=\partial_{x_j}A^k-\partial_{x_k}A^j$ and assume
  that
   \begin{equation*}
           \|x^tB\|_{L^\infty}^2 <\infty.
    \end{equation*}
    Moreover, assume that there exists a unit vector $\xi\in\mathbb S^{n-1}$ such that
  \begin{equation}\label{hypo:xi} 
    \xi^tB(x)\equiv0.
    \end{equation} 
    Notice that condition \eqref{hypo:xi} cannot hold in dimensions 1,2, since the field $B$ is either null (in 1D) or scalar (in 2D). Due to this, in all the results in the sequel, we will need to restrict to the higher dimensions $n\geq3$ if the magnetic field is present. 
 We are now ready to state our main results: let start with the case of a uniform electric field.
    \begin{theorem}
      \label{thm:UEtheorem} 
      Let \mbox{$n\geq 3$}, and let $u\in\mathcal C([0,T];L^2(\R^n))$ be a
  solution to
  \begin{equation*}
    i\partial_t u -\Delta_A u +V(x,t) u + \left(E(t)\cdot x \right) u + k(t) u=0
  \end{equation*} 
  in $\R^n\times[0,T]$, with $V$ as in (HE) and
  $A$ as in (HM),  $E \in \mathcal C([0,T];\R^n)$ and
  $k \in \mathcal C([0,T];\R)$.

  Assume that
  \begin{align*}
    & \norma*{e^{|\cdot|^2/\beta^2}
      u(\cdot,0)}_{L^2} 
      +\norma*{e^{|\cdot|^2/\alpha^2}
      u(\cdot,T)}_{L^2}<\infty,
  \end{align*}
  for some $\alpha,\beta>0$. If $\alpha\beta<4T$ then $u\equiv0$.

In addition, if $A\equiv 0$, the result holds for any $n\geq 1$.
\end{theorem}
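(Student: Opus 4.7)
The plan is to follow the logarithmic-convexity strategy of Escauriaza--Kenig--Ponce--Vega \cite{EKPV1, EKPV2, EKPV3} and its electromagnetic extension in \cite{BFGRV, CF}, after first removing the unbounded linear electric term $(E(t)\cdot x)u$ and the scalar $k(t)u$ by a gauge transformation. The scalar $k(t)$ disappears upon multiplying by the $x$-independent phase $e^{-i\int_0^t k(s)\,ds}$, which leaves $V$, $A$, and every Gaussian-weighted $L^2$ norm invariant. The linear electric term is then eliminated by an Avron--Herbst transformation
\[
u(x,t) = e^{i\,y(t)\cdot x}\,v(x,t),
\qquad y(t) := \int_0^t E(s)\,ds.
\]
A direct computation shows that $v$ satisfies
\[
i\partial_t v - \Delta_A v + V(x,t)v - 2i\,y(t)\cdot (\nabla - iA)v + |y(t)|^2 v = 0,
\]
i.e.\ an equation in which $V$, $A$, $B$, the bound $\|x^tB\|_{L^\infty}$ and the direction $\xi$ in \eqref{hypo:xi} are all unchanged, and the only new ingredients are a bounded-in-time first-order perturbation and a bounded-in-time $0$-order correction; moreover $|u|=|v|$, so the Gaussian-decay conditions at $t=0,T$ and the strict inequality $\alpha\beta<4T$ transfer verbatim to $v$.

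The core step is a logarithmic convexity argument applied to
\[
H(t) = \|e^{\varphi(\cdot,t)} v(\cdot,t)\|_{L^2}^2,\qquad \varphi(x,t) = \frac{|x|^2}{\mu(t)^2},
\]
with $\mu(t)$ linearly interpolating between $\beta$ at $t=0$ and $\alpha$ at $t=T$. Conjugating the magnetic Schr\"odinger operator by $e^{\varphi}$ splits it into a symmetric and an antisymmetric part; a standard commutator computation---after controlling $[\Delta_A,\varphi]$ and the contributions of $V_1$, $V_2$ and of the first-order Avron--Herbst remainder---produces an estimate of the form $\partial_t^2\log H(t)\ge -C$ on $[0,T]$. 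Combined with the boundary bounds $H(0),H(T)<\infty$ coming from the hypothesis and with the strict inequality $\alpha\beta<4T$, this forces $v\equiv 0$, hence $u\equiv 0$.

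The main obstacle is the log-convexity estimate itself, in the presence of both the magnetic potential and the complex part $V_2$. The computation brings in terms depending on the curvature $B$: those orthogonal to a privileged direction are absorbed by the bound on $\|x^tB\|_{L^\infty}$, while the residual component along $\xi$ is killed exactly by assumption \eqref{hypo:xi}---this is the reason why $n\ge 3$ must be imposed whenever $A\not\equiv 0$. The quantitative Gaussian-weighted $L^\infty$ bound on $V_2$ in (HE), with weight $\exp(T^2|x|^2/(\alpha t+\beta(T-t))^2)$, is calibrated precisely to absorb the imaginary part of $V$ against $e^{\varphi(x,t)}$ uniformly in $t\in[0,T]$. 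When $A\equiv 0$ all the $B$-dependent terms vanish, \eqref{hypo:xi} becomes vacuous, and the argument goes through in any dimension $n\ge 1$, which yields the last sentence of the theorem.
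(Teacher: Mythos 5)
There is a genuine gap, and it sits exactly where you wave your hands: the ``first-order Avron--Herbst remainder''. Your transformation $u=e^{iy(t)\cdot x}v$, $y(t)=\int_0^t E$, is a phase only; it trades the linear potential $E(t)\cdot x$ for the drift term $-2i\,y(t)\cdot(\nabla-iA)v$ (equivalently, it replaces $A$ by the time-dependent potential $A-y(t)$). This term is \emph{not} a harmless bounded perturbation in the log-convexity scheme: with $H(t)=\norma{e^{\varphi}v}_{L^2}^2$, $\varphi=|x|^2/\mu(t)^2$, its skew part contributes terms of the size $\int \frac{|y(t)||x|}{\mu(t)^2}\,|e^{\varphi}v|^2\,dx$, which are controlled by $\int\varphi\,|e^{\varphi}v|^2$ and not by $H$ itself; physically, the drift moves the center of the Gaussian along the classical trajectory, so the centered weight $e^{|x|^2/\mu(t)^2}$ does not satisfy the claimed inequality $\partial_t^2\log H\ge -C$. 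For the same reason you cannot simply invoke the established results: after your transformation the magnetic potential is time-dependent and there is a first-order term, so neither Theorem 1.3 in \cite{CF} nor Theorem 1 in \cite{EKPV3} applies as stated, and the ``standard commutator computation'' you appeal to has not been carried out for this class of equations.

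The fix is to complete the Galilean change of variables with a spatial translation, which is what the paper does: one sets $\varphi(x,t)=e^{i[\int_0^t k+\frac{\dot S(t)}{2}\cdot x+\int_0^t(\frac{\dot S^2}{4}-E\cdot S)]}u(x+S(t),t)$ with $S$ solving $\ddot S=-2E$ and, crucially, chosen (by subtracting a linear-in-$t$ correction) so that $S(0)=S(T)=0$. Then the electric term disappears entirely, the transformed $A$ and $V$ still satisfy (HM) and (HE), and since $S$ vanishes at both endpoints the weighted norms at $t=0,T$ are \emph{equal} to those of $u$, so one concludes directly from Theorem 1.3 in \cite{CF} ($n\ge3$, $A\not\equiv0$) and Theorem 1 in \cite{EKPV3} ($A\equiv0$, $n\ge1$). (A variant of your route can be salvaged: doing the full Avron--Herbst, phase \emph{plus} translation along the classical trajectory, leaves a Gaussian centered at $S(T)\neq0$ at the final time, and the strictness $\alpha\beta<4T$ lets you re-center at the cost of an arbitrarily small loss in $\alpha$; but some translation in $x$ is indispensable, and your proposal omits it.)
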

    \begin{theorem}
      \label{thm:QHOtheorem} 
  Let \mbox{$n\geq 3$}, and let $u\in\mathcal C([0,T];L^2(\R^n))$ be a
  solution to
  \begin{equation}\label{eq:QHOequation}
    i\partial_tu -\Delta_A u +V(x,t) u +\frac{\omega^2}{4}|x|^2u =0
  \end{equation} 
  in $\R^n\times[0,T]$, with $V$ as in (HE) and
  $A$ as in (HM) , and
  $0<\omega<\pi/2T$.

 Assume that  
  \begin{align*}
    & \norma*{e^{|\cdot|^2/\beta^2}
      u(\cdot,0)}_{L^2} 
      +\norma*{e^{|\cdot|^2/\alpha^2}
      u(\cdot,T)}_{L^2}<\infty,
  \end{align*}
  for some $\alpha,\beta>0$. If $\alpha\beta<4\frac{\sin(\omega T)}{\omega}$ then $u\equiv0$.

In addition, if $A\equiv 0$, the result holds for any $n\geq 1$.
\end{theorem}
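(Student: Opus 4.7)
The plan is to reduce Theorem \ref{thm:QHOtheorem} to a Schr\"odinger problem without quadratic confinement through the lens (Appell) transform, and then invoke the magnetic Hardy Uncertainty Principle of \cite{BFGRV,CF}. The restriction $0<\omega<\pi/(2T)$ ensures that $\cos(\omega t)>0$ on $[0,T]$, so that the transform is non-singular. Concretely, setting
\begin{equation*}
s=\frac{\tan(\omega t)}{\omega},\qquad y=\frac{x}{\cos(\omega t)},\qquad v(y,s)=(\cos(\omega t))^{n/2}\,e^{-i\omega|x|^{2}\tan(\omega t)/4}\,u(x,t),
\end{equation*}
a direct computation shows that $v$ solves
\begin{equation*}
i\p_{s}v-\Delta_{\widetilde{\A}}v+\widetilde V(y,s)v=0\qquad\text{in }\R^{n}\times[0,\widetilde T],
\end{equation*}
where $\widetilde T=\tan(\omega T)/\omega$, the transformed magnetic potential is $\widetilde{\A}(y,s)=\cos(\omega t(s))\,\A(y\cos(\omega t(s)))$, and $\widetilde V(y,s)$ arises from $V(x,t)$ under the same substitution.

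The next step is to track how the Gaussian weights transform. At $t=0$ the change of variables is the identity, hence $\norma*{e^{|\cdot|^{2}/\beta^{2}}u(\cdot,0)}_{L^{2}}=\norma*{e^{|\cdot|^{2}/\beta^{2}}v(\cdot,0)}_{L^{2}}$. At $t=T$ the substitution $x=y\cos(\omega T)$ together with the Jacobian yields
\begin{equation*}
\norma*{e^{|\cdot|^{2}/\alpha^{2}}u(\cdot,T)}_{L^{2}}=\norma*{e^{|\cdot|^{2}/\widetilde\alpha^{2}}v(\cdot,\widetilde T)}_{L^{2}},\qquad \widetilde\alpha=\frac{\alpha}{\cos(\omega T)}.
\end{equation*}
Consequently
\begin{equation*}
\widetilde\alpha\,\beta=\frac{\alpha\beta}{\cos(\omega T)}<\frac{4\sin(\omega T)/\omega}{\cos(\omega T)}=4\widetilde T,
\end{equation*}
so the hypothesis $\alpha\beta<4\sin(\omega T)/\omega$ is exactly the strict sub-critical Hardy threshold for the transformed flow on $[0,\widetilde T]$.

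To close the argument, I would verify that $\widetilde{\A}$ and $\widetilde V$ fit the hypotheses (HM) and (HE) with parameters $(\widetilde\alpha,\beta,\widetilde T)$. The magnetic field transforms as $\widetilde B(y,s)=\cos^{2}(\omega t)\,B(y\cos(\omega t))$, so both $\norma*{y^{t}\widetilde B}_{L^{\infty}}<\infty$ and the flatness condition $\xi^{t}\widetilde B\equiv 0$ are inherited from $B$, which is where the restrictions $n\ge 3$ and \eqref{hypo:xi} become essential. A similar bookkeeping on the time-parametrized Gaussian envelope of (HE) shows that $\widetilde V_{2}$ inherits the required weighted $L^{\infty}$ bound at the new parameters. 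The strict Hardy Uncertainty Principle for magnetic Schr\"odinger evolutions established in \cite{BFGRV,CF} then forces $v\equiv 0$, and inverting the lens transform yields $u\equiv 0$. The case $\A\equiv 0$ in arbitrary dimension $n\ge 1$ follows in exactly the same way from the scalar version of \cite{EKPV1,EKPV2}, which needs neither (HM) nor the restriction $n\ge 3$.

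The most delicate part I expect is precisely this last verification: one must check that after the change of variables the exponent appearing in the weight of (HE) matches, up to harmless multiplicative constants, the one prescribed by the new parameters $\widetilde\alpha,\beta,\widetilde T$, and that the now $s$-dependent transformed potential $\widetilde{\A}$ is admissible in the magnetic framework of \cite{BFGRV,CF}. Everything else is a direct chain-rule computation once the lens transform is in place.
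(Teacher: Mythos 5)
Your change of variables is exactly the paper's (the lens transform $s=\tan(\omega t)/\omega$, $y=x/\cos(\omega t)$, with the same phase and amplitude factors), and your bookkeeping of the Gaussian weights is correct: $\widetilde\alpha=\alpha/\cos(\omega T)$, so $\alpha\beta<4\sin(\omega T)/\omega$ becomes precisely $\widetilde\alpha\beta<4\widetilde T$ with $\widetilde T=\tan(\omega T)/\omega$. However, there is a genuine gap in the step you yourself flag as ``delicate'' and then leave unresolved: when the magnetic Laplacian $\Delta_A$ is conjugated by this transform, the new electric potential is \emph{not} just $V$ under the substitution. The dilation factor $a(t)=\sqrt{1+\omega^2t^2}$ produces an extra cross term of the form $-\tfrac{\dot a}{a}\,x\cdot\widetilde A(x,t)$ (in the paper's notation, the second term in \eqref{eq:defnVPositive}). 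This term is real-valued and generically grows linearly in $|x|$ even when $A$ is bounded, so it cannot be absorbed into $V_1$ (which must be bounded) nor into $V_2$ (which must have Gaussian decay); with it present, Theorem 1.3 of \cite{CF} simply does not apply, and your reduction breaks down.

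The paper's fix, which your proposal omits entirely, is a preliminary gauge transformation: by Lemma \ref{lem:cronstrom1} (Iwatsuka; see also Lemma 2.2 in \cite{BFGRV}) one may assume without loss of generality that $A$ is in the Cr\"onstrom gauge $x\cdot A\equiv 0$, and this gauge change preserves the hypotheses in (HM) --- the bound on $x^tB$, the condition $\xi^tB\equiv0$ --- and in addition makes $\widetilde A$ bounded. In that gauge the offending cross term $\tfrac{\dot a}{a}\,x\cdot\widetilde A$ vanishes identically, and only then do the transformed $\widetilde A$, $\widetilde V$ satisfy (HM) and (HE), so that Theorem 1.3 of \cite{CF} yields $v\equiv0$. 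You should add this gauge-fixing step before the lens transform. A smaller point: for the case $A\equiv0$ the threshold you need after the transform is exactly the sharp one $\widetilde\alpha\beta<4\widetilde T$, so the correct reference is Theorem 1 of \cite{EKPV3} (the sharp Hardy uncertainty principle), not the earlier non-sharp results of \cite{EKPV1,EKPV2}, which cover only a strictly smaller range of $\alpha\beta$.
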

It is not surprising that, for a repulsive quadratic potential (which scales as the harmonic oscillator), the following result holds.
    \begin{theorem}
      \label{thm:QHStheorem} 
      Let \mbox{$n\geq 3$}, and let $v\in\mathcal C([0,T];L^2(\R^n))$ be a
  solution to
  \begin{equation*}
    i\partial_t v -\Delta_A v +V(x,t) v -\frac{\nu^2}{4}|x|^2v =0
  \end{equation*} 
  in $\R^n\times[0,T]$, with $V$ as in (HE) and
  $A$ as in (HM) , and
  $0<\nu<1/T$.

  Assume that
  \begin{align*}
    & \norma*{e^{|\cdot|^2/\delta^2}
      v(\cdot,0)}_{L^2} 
      +\norma*{e^{|\cdot|^2/\gamma^2}
      v(\cdot,T)}_{L^2}<\infty,
  \end{align*}
  for some $\gamma,\delta>0$. If $\gamma\delta<4\frac{\sinh{\nu T}}{\nu}$ then $u\equiv0$.

In addition, if $A\equiv 0$, the result holds for any $n\geq1$.
\end{theorem}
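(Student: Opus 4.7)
The plan is to reduce Theorem \ref{thm:QHStheorem} to Theorem \ref{thm:UEtheorem} via a hyperbolic Appell (Mehler-type) transformation that turns the repulsive harmonic flow into a free Schr\"odinger flow. This parallels the expected strategy for Theorem \ref{thm:QHOtheorem}, with trigonometric functions replaced by their hyperbolic analogues, which is natural since the sign of the quadratic potential is reversed.

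Concretely, given a solution $v$ as in the statement, I would set
\begin{equation*}
  w(y,s):=\cosh(\nu t)^{n/2}\exp\!\left(i\tfrac{\nu}{4}\sinh(\nu t)\cosh(\nu t)|y|^2\right)v\!\left(y\cosh(\nu t),t\right),
  \quad s=\tfrac{\tanh(\nu t)}{\nu},
\end{equation*}
so that $s$ ranges over $[0,S]$ with $S:=\tanh(\nu T)/\nu$. A computation analogous to the classical Mehler reduction shows that the pure quadratic term is absorbed by the phase, and $w$ solves a Schr\"odinger equation of the type considered in Theorem \ref{thm:UEtheorem}, with effective potential $\tilde V(y,s)=\cosh^2(\nu t)\,V(y\cosh(\nu t),t)$ and, up to a radial gauge, effective magnetic potential $\tilde A(y,t)=\cosh(\nu t)\,A(y\cosh(\nu t))$. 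Changing variables $y=x/\cosh(\nu t)$ at $t=0,T$ gives
\begin{equation*}
  \|e^{|\cdot|^2/\delta^2}w(\cdot,0)\|_{L^2}=\|e^{|\cdot|^2/\delta^2}v(\cdot,0)\|_{L^2},\quad
  \|e^{\cosh^2(\nu T)|\cdot|^2/\gamma^2}w(\cdot,S)\|_{L^2}=\|e^{|\cdot|^2/\gamma^2}v(\cdot,T)\|_{L^2}.
\end{equation*}
Setting $\tilde\beta=\delta$, $\tilde\alpha=\gamma/\cosh(\nu T)$ and $\tilde T=S$, the hypothesis $\gamma\delta<4\sinh(\nu T)/\nu$ becomes exactly $\tilde\alpha\tilde\beta<4\tilde T$, and applying Theorem \ref{thm:UEtheorem} to $w$ yields $w\equiv0$, hence $v\equiv0$.

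The routine verifications are that $\tilde V$ still satisfies (HE) and $\tilde A$ still satisfies (HM): the real part of $V$ remains real and bounded (up to a factor $\cosh^2(\nu T)$), while the weighted $L^\infty$ bound on $V_2$ follows from a pointwise Gaussian comparison since all the scaling factors are bounded on $[0,T]$ thanks to $\nu T<1$; moreover a direct computation gives $\tilde B_{jk}(y,t)=\cosh^2(\nu t)\,B_{jk}(y\cosh(\nu t))$, so that $\xi^t\tilde B\equiv 0$ and $\|y^t\tilde B\|_{L^\infty}$ stays uniformly bounded. The hard part is that, strictly speaking, $\tilde A$ depends on $t$ while (HM) assumes a stationary magnetic potential. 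This gap should be fixed by observing that the Carleman argument behind Theorem \ref{thm:UEtheorem} relies only on the pointwise conditions $\xi^t B\equiv0$ and $\|x^tB\|_{L^\infty}<\infty$, both of which are preserved uniformly in $t$; in other words, one needs a minor extension of Theorem \ref{thm:UEtheorem} to magnetic potentials of this specific time-dependent multiplicative form, or equivalently a direct rerun of its proof in the transformed setting. In the magnetic-free case $A\equiv0$ no such extension is needed and the reduction works in every dimension $n\geq1$.
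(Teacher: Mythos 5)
Your reduction is, in substance, the paper's own: rewriting the paper's transformed function $\psi$ (the second proposition of Section 4, built from Lemma \ref{lem:HOcambio} with $a(t)=\sqrt{1-\nu^2t^2}$) in the original time variable gives exactly your $w$, the two weighted identities and the bookkeeping $\tilde\alpha\tilde\beta=\gamma\delta/\cosh(\nu T)<4\tanh(\nu T)/\nu\iff\gamma\delta<4\sinh(\nu T)/\nu$ are the same, and invoking Theorem \ref{thm:UEtheorem} with $E\equiv0$, $k\equiv0$ amounts to the paper's direct appeal to Theorem 1.3 of \cite{CF} (resp.\ Theorem 1 of \cite{EKPV3} when $A\equiv0$). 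The one genuine omission is the Cr\"onstr\"om gauge step. By Lemma \ref{lem:HOcambio} the conjugation does \emph{not} just dilate $V$: it produces the additional term $\frac{\nu^2t}{1-\nu^2t^2}\,x\cdot\widetilde A(x,t)$ in the transformed potential, see \eqref{eq:defnVNegative}, and your formula $\widetilde V(y,s)=\cosh^2(\nu t)V(y\cosh(\nu t),t)$ is correct only when $x\cdot A\equiv0$. Under (HM) alone, $x\cdot A(x)$ may be unbounded, so this extra real, time-dependent term fits neither (HE) nor the hypotheses of the theorems you want to apply. The paper therefore first passes to the Cr\"onstr\"om representative of $A$ (Lemma \ref{lem:cronstrom1}), for which \eqref{eq:gaugefinal} holds, and observes in Remark \ref{rem:cronstrom} that this gauge change (multiplication by a unimodular factor $e^{i\varphi(x)}$) preserves the Gaussian decay assumptions, the bound on $x^tB$, and condition \eqref{hypo:xi}. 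Your phrase ``up to a radial gauge'' points in this direction, but the gauge is needed to kill the cross term in $\widetilde V$, not merely to normalize $\widetilde A$, and its harmlessness for the hypotheses has to be recorded; as written, your effective potential is wrong whenever $x\cdot A\not\equiv0$.

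Concerning what you call the hard part: the paper does not prove an extension of Theorem \ref{thm:UEtheorem} and does not rerun a Carleman argument. It simply checks that the transformed $\widetilde A$, $\widetilde V$ satisfy the conditions (HM), (HE) -- understood uniformly in time, e.g.\ $\widetilde B(y,t)=\cosh^2(\nu t)B(y\cosh(\nu t))$ so $\xi^t\widetilde B\equiv0$ and $\|y^t\widetilde B\|_{L^\infty}\le\cosh(\nu T)\|x^tB\|_{L^\infty}$, exactly as you compute -- and then concludes directly from \cite{CF} and \cite{EKPV3}, whose statements are what Theorem \ref{thm:UEtheorem} itself rests on. So your instinct that only the time-uniform pointwise conditions on $B$ and on the decomposition of $V$ matter is the right one, but the correct way to close the argument is to cite those results (as the paper does), after the gauge reduction described above.
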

\begin{remark}
  Notice that in the cases $\omega=\nu=0$ the tresholds in Theorems \ref{thm:QHOtheorem},
  \ref{thm:QHStheorem} are coherent with the theory already extablished in \cite{CF} and \cite{EKPV3}.
\end{remark}
In the case of a uniform magnetic potential we have an analogous result, in which we are  able to treat only the even dimensional cases.
\begin{theorem}
  	\label{thm:UMtheorem} 
  Let $n\geq 4$ be an even number, and let $u\in\mathcal C([0,T];L^2(\R^n))$ be a
  solution to
  \begin{equation}\label{eq:UMequation}
     i\partial_tu -\Delta_{A+C} u +V(x,t) u  =0
  \end{equation} 
  in $\R^n\times[0,T]$, with $V$ as in (HE),
  $A$ as in (HM), and
   $ C\colon x\in\R^n \mapsto M x/2 \in \R^n$,
  with $M \in \R^{n\times n}$ such that
  \begin{equation}\label{eq:propM}
    M^t=-M, \quad M^tM=b^2 Id,
  \end{equation}
  for $0<b<\pi/2T$.
  
  Assume that 
  \begin{align*}
    & \norma*{e^{|\cdot|^2/\beta^2}
      u(\cdot,0)}_{L^2} 
      +\norma*{e^{|\cdot|^2/\alpha^2}
      u(\cdot,T)}_{L^2}<\infty,
  \end{align*}
  for some $\alpha,\beta>0$.
 If $\alpha\beta<4\frac{\sin{b T}}{b}$ then $u\equiv0$.

In addition, if $A\equiv 0$, the result holds for any $n\geq 1$.
\end{theorem}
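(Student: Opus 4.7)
The plan is to reduce equation \eqref{eq:UMequation} to the harmonic oscillator equation \eqref{eq:QHOequation} by a time-dependent rotation, in the spirit of Remark~\ref{rem:conne}, and then invoke Theorem~\ref{thm:QHOtheorem}. With $C(x) = Mx/2$, the antisymmetry of $M$ gives $\nabla\cdot C = 0$ and $|C|^2 = b^2|x|^2/4$, so a direct expansion of the magnetic Laplacian yields
\begin{equation*}
-\Delta_{A+C}u = -\Delta_A u + iMx\cdot\nabla u + \frac{b^2}{4}|x|^2 u + (A\cdot Mx)\,u,
\end{equation*}
and \eqref{eq:UMequation} becomes a harmonic oscillator-type equation with magnetic potential $A$, frequency $b$, an extra drift $iMx\cdot\nabla u$, and zero-order term $V_* := V + A\cdot Mx$.

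Setting $R(t) := e^{tM} \in SO(n)$, which is well-defined since $M^t = -M$ and forces $n$ to be even via $M^2 = -b^2\,Id$, and $v(x,t) := u(R(t)x, t)$, a short computation based on $R^t R = Id$, $R'(t) = MR(t) = R(t)M$ and $R^t M R = M$ shows that the rotational contribution to $\partial_t v$ cancels the drift $iMx\cdot\nabla u$, yielding
\begin{equation*}
i\partial_t v - \Delta_{\tilde A} v + \frac{b^2}{4}|x|^2 v + \tilde V(x,t)\, v = 0,
\end{equation*}
with $\tilde A(x,t) := R(t)^t A(R(t)x)$ and $\tilde V(x,t) := V(R(t)x,t) + \tilde A(x,t)\cdot Mx$. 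The Gaussian weights are rotation-invariant, so the decay hypotheses on $u(\cdot,0)$ and $u(\cdot,T)$ transfer unchanged to $v$; the magnetic field $\tilde B(x,t) = R(t)^t B(R(t)x) R(t)$ inherits both the $L^\infty$ bound on $x^t \tilde B$ and a kernel direction $\tilde\xi(t) = R(t)^t \xi$. With $\omega := b$, the threshold $\alpha\beta < 4\sin(bT)/b$ and the frequency bound $b < \pi/(2T)$ are precisely those of Theorem~\ref{thm:QHOtheorem}, whose application (in the natural extension covering the time-rotated potential $\tilde A$) gives $v \equiv 0$, and hence $u \equiv 0$. When $A \equiv 0$ one has $\tilde A \equiv 0$ and the reduction works without the dimensional restriction.

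The main obstacle is twofold. First, the term $\tilde A \cdot Mx$ in $\tilde V$ is real-valued but not a priori bounded, because (HM) only controls $x^t B$ and not $A$ itself; the natural remedy is a preliminary gauge transformation $u \mapsto e^{i\varphi(x)}u$ with $\varphi$ solving $(Mx)\cdot\nabla\varphi = -A\cdot Mx$ along the integral curves of the rotational vector field $Mx$, which cancels the offending term without altering the magnetic field $B$ and therefore preserves (HM). Second, $\tilde A$ is time-dependent, whereas Theorem~\ref{thm:QHOtheorem} is stated for a static magnetic potential: one must check that the underlying Carleman-type estimates accommodate a potential of the form $R(t)^t A(R(t)\cdot)$, which follows because all the quantities involved are either rotation-invariant or move rigidly under the smooth family $R(t)$. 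This bookkeeping---not any genuinely new estimate---is the main technical content of the argument.
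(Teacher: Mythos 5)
Your overall strategy is the paper's: conjugate by the rotation $R(t)=e^{tM}$ (so that $\dot R=MR$, as in Lemma~\ref{lem:LemmaRotante} and Corollary~\ref{cor:LemmaRotante}), use \eqref{eq:propM} to turn $|Mx|^2/4$ into the harmonic potential $b^2|x|^2/4$, note that the Gaussian weights are rotation invariant, and conclude by Theorem~\ref{thm:QHOtheorem} with $\omega=b$ (and the same thresholds $b<\pi/2T$, $\alpha\beta<4\sin(bT)/b$). The one structural step you skip is the paper's preliminary reduction to the Cr\"onstrom gauge via Lemma~\ref{lem:cronstrom1}, which applies verbatim here because $x\cdot Mx=0$ gives $\widetilde{A+C}=\widetilde A+C$; this is what makes $A$ itself bounded (Remark~\ref{rem:cronstrom}) and keeps (HM) usable after the transformation, and it is the reason your $A\cdot Mx$ appears ``not a priori bounded'' in your write-up (after the gauge reduction it is real-valued with at most linear growth, which is still outside (HE), so the term must be dealt with one way or another).

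The genuine gap is precisely your proposed remedy for that cross term. Solving $(Mx)\cdot\nabla\varphi=-A\cdot Mx$ is a transport problem along the integral curves of the field $Mx$, which are closed circles of period $2\pi/b$ (since $M^2=-b^2\,Id$); integrating the equation over one orbit $\gamma$ forces the compatibility condition $\oint_\gamma A\cdot d\ell=0$, i.e.\ vanishing flux of $B$ through the discs bounded by these orbits. Neither (HM) nor \eqref{hypo:xi} implies this, so the gauge you propose does not exist in general, and even when it does it need not preserve the boundedness and Cr\"onstrom structure of $A$ that the later application of Theorem~\ref{thm:QHOtheorem} relies on. For comparison, the paper's rotation proposition records the transformed data as $\widetilde A(x,t)=e^{-Mt}A(e^{Mt}x)$, $\widetilde V(x,t)=V(e^{Mt}x,t)$ with no such term: in the harmonic-oscillator reduction the analogous contribution is $x\cdot\widetilde A$ and is annihilated by the Cr\"onstrom gauge, whereas here it is $Mx\cdot\widetilde A$, which that gauge does not kill, so you have put your finger on a real subtlety rather than closed it. Your second concern---that after rotation $\widetilde A$ is time-dependent and the kernel direction becomes $R(t)^t\xi$, while (HM) and Theorem~\ref{thm:QHOtheorem} are stated for a static $A$ and a fixed $\xi$---is legitimate, but asserting that the extension is ``bookkeeping'' is not a proof; the paper at this point simply invokes Theorem~\ref{thm:QHOtheorem} (i.e.\ Theorem~1.3 of \cite{CF}). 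When $A\equiv0$ both issues disappear and your argument coincides with the paper's.
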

\begin{remark}
  Notice that the magnetic field associated to the potential $C$ is the uniform magnetic field $DC-DC^t=M$.
  The proof of Theorem \ref{thm:UMtheorem} strongly relies on the fact that, expanding the Hamiltonian $-\Delta_{A+C}=-\Delta_A + pert.$, we need to reduce matters to the harmonic oscillator case, in the same spirit as in Remark \ref{rem:conne} above. Consequently, we need $M^tM=\lambda Id$ to be a constant multiple of the identity.  On the other hand, there are no anti-symmetric matrices $M \in \R^{n\times n}$ such that $M^tM=Id$ if $n$ is odd, and 
  this is why we are only able to handle even dimensions. Notice also that 
  Example \ref{ex:UM} is covered by Thm.~\ref{thm:UMtheorem}.
\end{remark}
\begin{remark}
  The geometric condition \eqref{hypo:xi} already appeared in \cite{BFGRV, CF}. At the moment, it is still unclear whether this is necessary or not in our results.
\end{remark}

The proofs of the main results rely on suitable pseudoconformal change of variables, which reduce matters to perturbations of free Schr\"odinger evolutions (roughly speaking, permit to get rid of harmonic oscillators, uniform electric and magnetic potentials). 
This was observed by Niederer in \cite{niederer} (see also references
therein, and \cite{boyer}, \cite{bluman80,bluman83}),  in order to 
determine the Maximal Kinetic Invariance group for the Schr\"odinger equation,
 and deeply analyzed by Takagi in \cite{takagi1,takagi2,takagi3} (see also references therein). 
Here we explain such techniques in a slightly more general
form in Section \ref{sec:changeofvariables}.
We remark that the electro-magnetic potentials we are considering in this paper 
are the only examples of potentials that can be handled 
by these methods (see Remark \ref{rmk:solonoi} and \cite{bluman83}).
We finally remark that the bounds on $\omega T, b T$ in Theorems \ref{thm:QHOtheorem}, \ref{thm:UMtheorem} appear as necessary, in order these changes of variables make sense, but seem to be technical assumptions, which quite likely is possible to avoid.

\subsection*{Aknowledgments}
We wish to thank Gianluca Panati for addressing us 
to the reference \cite{thaller} and for interesting discussions.

\section{Preliminary transformations}\label{sec:changeofvariables}
All the statements in this Section
have to be considered formal: in their application
 some care has to be given in determining the necessary regularity and
in how the considered time intervals change.

The following lemmata can be proven easily by direct computation. We just give some details
for the proof of Corollary \ref{cor:LemmaRotante}.
\begin{lemma}
  Let $u$ be a solution to
\begin{equation*}
i \partial_t u -\Delta_A u + V(x,t) \, u + k(t) \, u =0,
\end{equation*}
for $k=k(t)\in \C$ sufficiently regular.
Set 
\begin{equation*}
\varphi(x,t)=\exp\left[ i \int_0^t k(\tau)\,d\tau\right]   u(x,t),	
\end{equation*}
Then $\varphi$ is solution to
\begin{equation*}
i \partial_t \varphi - \Delta_{ A} \varphi 
		 + V (x,t) \,\varphi
		 = 0.  
\end{equation*}
\end{lemma}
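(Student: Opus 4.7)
The plan is to verify the claim by direct substitution, since the transformation $u \mapsto \varphi$ is a pure time-gauge that multiplies $u$ by a function depending only on $t$. First I would set $\Phi(t):=\exp\!\left[i\int_0^t k(\tau)\,d\tau\right]$, so that $\varphi(x,t)=\Phi(t)u(x,t)$ and $\Phi'(t)=ik(t)\Phi(t)$ by the fundamental theorem of calculus.

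Because $\Phi$ has no $x$-dependence, the magnetic Laplacian commutes with multiplication by $\Phi$, and no extra terms arise from the magnetic gauge: one simply has $\Delta_A\varphi=\Phi\,\Delta_A u$ and $V\varphi=\Phi\, V u$. The remaining step is to expand the time derivative via Leibniz,
\begin{equation*}
i\partial_t\varphi \;=\; i\Phi'(t)\,u + \Phi(t)\,(i\partial_t u) \;=\; -k(t)\Phi(t)\,u + \Phi(t)(i\partial_t u),
\end{equation*}
and then substitute the equation satisfied by $u$, namely $i\partial_t u=\Delta_A u - V u - k(t)\,u$. The two contributions of the form $k(t)\Phi(t)u$ should cancel (up to the sign convention in the exponent of $\Phi$), leaving precisely $i\partial_t\varphi - \Delta_A\varphi + V\varphi = 0$.

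There is no real obstacle in this argument; it is a one-step computation. The only delicate point is choosing the sign in the phase of $\Phi$ so that the time derivative of the phase exactly offsets the scalar multiplicative term $k(t)u$ coming from the original equation. Sufficient regularity just amounts to $k\in C([0,T];\mathbb{C})$, which makes $\Phi$ of class $C^1$ in $t$ and justifies the use of Leibniz's rule; no spatial regularity beyond that already required for $u$ to solve the given equation is needed.
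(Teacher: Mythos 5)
Your method---direct substitution, using that the multiplier depends only on $t$ and hence commutes with $\Delta_A$ and with multiplication by $V$---is exactly the computation the paper has in mind (the paper omits the proof, declaring it a direct computation), and the structure of your argument is sound. The one thing you must not leave ``up to the sign convention'' is the sign itself: carry the final line out. With $\Phi(t)=\exp\left[i\int_0^t k(\tau)\,d\tau\right]$ you have $i\Phi'=-k\Phi$, hence
\begin{equation*}
i\partial_t\varphi-\Delta_A\varphi+V\varphi
=\Phi\left(i\partial_t u-\Delta_A u+Vu\right)-k\Phi u
=\Phi\left(-k u\right)-k\Phi u
=-2k(t)\,\varphi ,
\end{equation*}
so the two terms of the form $k\Phi u$ \emph{add} rather than cancel: with the exponent as written, $\varphi$ solves $i\partial_t\varphi-\Delta_A\varphi+V\varphi+2k(t)\varphi=0$, not the claimed equation. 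The cancellation you describe requires $\Phi(t)=\exp\left[-i\int_0^t k(\tau)\,d\tau\right]$ (equivalently, the equation for $u$ with $-k(t)u$ in place of $+k(t)u$); this is a sign slip in the statement itself, and a complete proof should either correct the exponent or record the discrepancy explicitly instead of hedging. Two further small points are fine as you have them: since $k$ may be complex, $\Phi$ is not a unimodular phase, but no step of the computation uses unimodularity; and continuity of $k$ indeed suffices to make $\Phi$ of class $\mathcal C^1$ and justify the Leibniz rule.
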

\begin{lemma}[Generalized Galilean transformations]
\label{lem:acceleratore}
Let $u$ be a solution to
\begin{equation*}
i \partial_t u -\Delta_A u + V(x,t) \, u + E(t) \cdot x \, u =0,
\end{equation*}
for $E=E(t)\in\R^n$ sufficiently regular.
Set 
\begin{equation}\label{eq:changeaccelerato}
\varphi(x,t)={\exp}
			\Bigl[ 
			i \frac{\dot S(t)}{2} \cdot x 
			+ i \int_0^t \Bigl(\frac{\dot S(\tau)^2}{4}
			                  - E(\tau)\cdot S(\tau) \Bigr)
					\, d\tau
			\Bigr]
		   u(x+S(t),t),	
\end{equation}
for $S=S(t)\in\R^n$ sufficiently regular. 

Then $\varphi$ is solution to
\begin{equation*}
i \partial_t \varphi - \Delta_{\widetilde A} \varphi 
		 +\widetilde V (x,t) \,\varphi
		 + \left(E(t)+\frac{\ddot S(t)}{2}\right)\cdot x\, \varphi
		= 0,
\end{equation*}
with 
\begin{equation*}
\begin{split}
\widetilde A(x,t)& = A(x+S(t),t), \\
\widetilde V (x,t) &= V(x+S(t),t) + \dot S(t) \cdot \widetilde A (x,t).
\end{split}
\end{equation*}
\end{lemma}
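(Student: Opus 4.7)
This is a direct verification by the chain rule, under the tacit regularity hypotheses of the section. Set $y := x + S(t)$ and
\[
\Phi(x,t) := \frac{\dot S(t)}{2}\cdot x + \int_0^t\!\left(\frac{\dot S(\tau)^2}{4} - E(\tau)\cdot S(\tau)\right)d\tau,
\]
so that $\varphi(x,t) = e^{i\Phi(x,t)}\,u(y,t)$. The only algebraic facts I use are $\nabla_x\Phi = \tfrac{1}{2}\dot S$, $\partial_t\Phi = \tfrac{1}{2}\ddot S\cdot x + \tfrac{1}{4}\dot S^2 - E\cdot S$, the chain-rule identity $\partial_{x_j}[f(y,t)] = (\partial_{y_j}f)(y,t)$, and $\widetilde A(x,t) = A(y,t)$.

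First I rewrite $(\nabla_x - i\widetilde A)\varphi$ in terms of the magnetic gradient at $y$: the $x$-linear part of $\Phi$ acts as a momentum boost, giving
\[
(\nabla_x - i\widetilde A)\varphi = e^{i\Phi}\Bigl[(\nabla_y - iA(y,t))u + i\tfrac{\dot S}{2}u\Bigr].
\]
Iterating this formula, keeping $(\nabla_y - iA)u$ as a single block and expanding only at the end, yields
\[
\Delta_{\widetilde A}\varphi = e^{i\Phi}\Bigl[\Delta_A u|_y + i\,\dot S\cdot\nabla_y u + \dot S\cdot A(y,t)\,u - \tfrac{|\dot S|^2}{4}u\Bigr].
\]
The chain rule in $t$ produces the companion formula
\[
i\partial_t\varphi = e^{i\Phi}\Bigl[-\partial_t\Phi\,u + i(\partial_t u)|_y + i\,\dot S\cdot\nabla_y u\Bigr].
\]

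Subtracting, the two $i\,\dot S\cdot\nabla_y u$ terms cancel and the $\pm|\dot S|^2/4$ pieces annihilate, leaving
\[
i\partial_t\varphi - \Delta_{\widetilde A}\varphi = e^{i\Phi}\Bigl[(i\partial_t u - \Delta_A u)|_y - \tfrac{\ddot S}{2}\cdot x\,u + E\cdot S\,u - \dot S\cdot A(y,t)u\Bigr].
\]
Substituting $(i\partial_t u - \Delta_A u)(y,t) = -V(y,t)u - E(t)\cdot y\,u$ from the equation satisfied by $u$, and splitting $E\cdot y = E\cdot x + E\cdot S$, the $\pm E\cdot S$ terms telescope away; what survives is exactly $-\widetilde V(x,t)\varphi - \bigl(E(t) + \tfrac{1}{2}\ddot S(t)\bigr)\cdot x\,\varphi$ with $\widetilde V(x,t) = V(y,t) + \dot S\cdot\widetilde A(x,t)$, which rearranges to the claim. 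No step is genuinely delicate; the only bookkeeping subtlety is that when one squares $(\nabla_x - i\widetilde A)$ the momentum shift $i\dot S/2$ contributes twice (once from differentiating the $v_k$ factor and once from the prefactor), assembling into a full $i\dot S\cdot(\nabla_y - iA)u$ whose imaginary part is precisely what eventually produces the $\dot S\cdot\widetilde A$ summand in $\widetilde V$.
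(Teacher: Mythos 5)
Your computation is correct: the boosted magnetic gradient, the time-derivative formula, the cancellation of the $i\dot S\cdot\nabla_y u$ and $\pm|\dot S|^2/4$ terms, and the telescoping of $E\cdot S$ all check out and reproduce exactly the stated $\widetilde V$ and $\bigl(E+\tfrac{1}{2}\ddot S\bigr)\cdot x$ terms. This is precisely the direct chain-rule verification the paper has in mind (it omits the proof, stating the lemma "can be proven easily by direct computation"), so your argument coincides with the intended one.
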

\begin{remark}
  It is useful to read the 
  change of variables in \eqref{eq:changeaccelerato} from a  physical point of view:
  we are changing coordinate system, passing from one in rest
  to one   integral with the accelerating particle.
\end{remark}
\begin{lemma}[Comoving frame]
\label{lem:HOcambio}
Let $u$ be a solution to 
\begin{equation*}
i \partial_t -\Delta_A u + V(x,t) \, u + \frac{h(t)}{4}|x|^2 \, u =0,
\end{equation*}
with $h=h(t)\in\R$ sufficiently regular.
Set 
\begin{equation*}
\varphi(x,t)=a^{-n/2}{\exp}
			\left[ 
			-i \frac{\dot a}{4a}  |x|^2 
			\right]
		   u\left(\frac{x}{a},\int_0^t a(\tau)^{-2}d\tau\right),	
\end{equation*}
for $a=a(t)\in\R$ sufficiently regular. 

Then $\varphi$ is solution to
\begin{equation*}
i \partial_t \varphi - \Delta_{\widetilde A} \varphi 
		 +\widetilde V (x,t) \,\varphi
		 + \left(\frac{\widetilde h(t)}{a^4}-\frac{\ddot a}{a}  \right)\frac{|x|^2}{4}\, \varphi
		= 0,
\end{equation*}
with 
\begin{equation*}
\begin{split}
  \widetilde h(t)&=h\left(\int_0^t a(\tau)^{-2}d\tau\right),\quad 
  \widetilde A(x,t) = \frac{1}{a} A\left(\frac{x}{a},\int_0^t a(\tau)^{-2}d\tau\right), \\
  \widetilde V (x,t) &= \frac{1}{a^2} V\left(\frac{x}{a},\int_0^t a(\tau)^{-2}d\tau\right) -  \frac{\dot a}{a} x  \cdot \widetilde A (x,t).
\end{split}
\end{equation*}
\end{lemma}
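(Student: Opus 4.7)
The plan is a direct chain-rule computation, in the same spirit as the previous two lemmata of this section. I would introduce the shorthand $s(t) := \int_0^t a(\tau)^{-2}\,d\tau$, $y := x/a(t)$, and the real phase $\phi(x,t) := -\frac{\dot a(t)}{4a(t)}|x|^2$, so that the ansatz reads $\varphi(x,t) = a^{-n/2}\,e^{i\phi}\,u(y,s)$. I would then compute $i\partial_t\varphi$ and $\Delta_{\widetilde A}\varphi$ separately, add them, and use the fact that $u$ satisfies the original equation in the variables $(y,s)$ to substitute for $i\partial_s u$.

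For the time derivative, using $\partial_t s = a^{-2}$, $\partial_t y = -(\dot a/a)y$, and $\partial_t\phi = -\tfrac{1}{4}(\ddot a/a - \dot a^2/a^2)|x|^2$, one gets (after removing the common prefactor $a^{-n/2}e^{i\phi}$) four contributions: $\tfrac{i}{a^2}\partial_s u$, an advective piece $-i(\dot a/a)y\cdot\nabla_y u$, a quadratic piece $\tfrac{1}{4}(\ddot a/a - \dot a^2/a^2)|x|^2 u$, and a zeroth-order imaginary piece $-\tfrac{in\dot a}{2a}u$ coming from differentiating the normalization $a^{-n/2}$. For the spatial operator I would use the pull-through identity
\begin{equation*}
(\nabla_x - i\widetilde A)(e^{i\phi}f) = e^{i\phi}\bigl[(\nabla_x - i\widetilde A)f + i\nabla\phi\,f\bigr],
\end{equation*}
together with $\nabla_x = a^{-1}\nabla_y$ and the rescaling $\widetilde A(x,t) = a^{-1}A(y,s)$, to show that $\Delta_{\widetilde A}\varphi$ produces $a^{-2}\Delta_A u(y,s)$ plus three mirror terms that cancel the advective, zeroth-order, and quadratic pieces of $i\partial_t\varphi$, except for a residual contribution $-(\dot a/a)\,y\cdot A(y,s)\,u$ arising from the cross coupling between $\nabla\phi$ and $\widetilde A$.

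Combining, $i\partial_t\varphi - \Delta_{\widetilde A}\varphi$ reduces to $\tfrac{i}{a^2}\partial_s u - a^{-2}\Delta_A u + \tfrac{\ddot a}{4a}|x|^2 u + (\dot a/a)\,y\cdot A(y,s)\,u$. Substituting the equation for $u$ replaces $\tfrac{i}{a^2}\partial_s u - a^{-2}\Delta_A u$ by $-a^{-2}V(y,s)u - \tfrac{\widetilde h}{4a^4}|x|^2 u$ (using $|y|^2 = |x|^2/a^2$), from which the quadratic coefficient becomes $(\widetilde h/a^4 - \ddot a/a)/4$ and the remaining potential piece is $\widetilde V = a^{-2}V(y,s) - (\dot a/a)\,y\cdot A(y,s) = a^{-2}V - (\dot a/a)\,x\cdot\widetilde A$, exactly as stated. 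The only delicate step is the magnetic bookkeeping: since $\phi$ is real, the cross term between $i\nabla\phi$ and $\widetilde A$ in the squared operator $(\nabla_x - i\widetilde A)^2$ produces precisely the advective correction to $\widetilde V$, and verifying this cancellation is the only point going beyond purely mechanical manipulation.
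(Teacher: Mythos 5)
Your computation is correct: the chain rule for $i\partial_t\varphi$, the pull-through identity for $(\nabla_x-i\widetilde A)$ with $\nabla\phi=-\frac{\dot a}{2a}x$, the cancellation of the $\frac{\dot a^2}{a^2}|x|^2$, dilation, and advective terms, and the residual cross term $-\frac{\dot a}{a}\,y\cdot A(y,s)\,u$ that gets absorbed into $\widetilde V$ all check out, yielding exactly the stated equation. This is precisely the "direct computation" the paper invokes (it gives no further details for this lemma), so your proposal matches the paper's approach.
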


\begin{lemma}[Larmor or rotating frame]
\label{lem:LemmaRotante}
Let $u$ be a solution to
\begin{equation*}
i \partial_t u -\Delta_A u + V(x,t) \,u=0.
\end{equation*}
Set 
\begin{equation*}
\varphi(x,t)=   u\left(R(t)\,x,g(t)\right),	
\end{equation*}
for $R=R(t) \in \R^{n\times n}$ and $g=g(t)\in\R$ sufficiently regular. 

Then $\varphi$ is solution to
\begin{equation}\label{eq:nuovamagnetico}
\begin{split}
i \partial_t \varphi &- \Delta_{\widetilde A} \varphi +\widetilde V(x,t) \, \varphi 
		- i R^{-1} \dot R x \cdot \nabla \varphi \\
		=& 
[g'\Delta u (Rx,g) - tr(RR^{t} D^2 u(Rx,g)] \\
& -i[g' (div A) (Rx,g) - R_{ki}R^t_{ij}\partial_k A_j(Rx,g)]\varphi \\
& -2i[g'A(Rx,g)-RR^t A(Rx,g)]\cdot \nabla u(Rx,g) \\
& -[g'|A(Rx,g)|^2-|R^{t}A(Rx,g)|^2]\varphi,
\end{split}
\end{equation}
with 
\begin{equation}\label{eq:defnVLemmaRotante}
  \widetilde V(x,t)= g'V(Rx,g), \quad
  \widetilde A(x,t)= R^{t} A\left(Rx,g\right). 
\end{equation}
\end{lemma}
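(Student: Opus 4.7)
The statement is a direct chain-rule computation; no conceptual machinery is needed beyond keeping track of the fact that the rotation $R(t)$ is not assumed orthogonal, so each occurrence of $RR^{t}$ that fails to reduce to the identity contributes one of the four defect terms on the right-hand side of \eqref{eq:nuovamagnetico}. The plan is to differentiate $\varphi(x,t)=u(R(t)x,g(t))$ once in $t$ and twice in $x$, substitute into the target equation, and use the equation satisfied by $u$ to eliminate $i\partial_t u$ in favor of spatial derivatives.

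First I would handle the time derivative. By the chain rule $i\partial_t\varphi = i\dot R x\cdot(\nabla u)(Rx,g)+g'\,i\partial_t u(Rx,g)$; using $i\partial_t u=\Delta_A u-Vu$ at the point $(Rx,g)$, the second summand becomes $g'\Delta_A u(Rx,g)-g'V(Rx,g)\,u(Rx,g)$, so the choice $\widetilde V(x,t)=g'V(Rx,g)$ from \eqref{eq:defnVLemmaRotante} immediately absorbs the zero-order term. For the first summand I would note that $\nabla\varphi=R^{t}(\nabla u)(Rx,g)$, hence $(\nabla u)(Rx,g)=R^{-t}\nabla\varphi$ and therefore $i\dot R x\cdot\nabla u=i(R^{-1}\dot R\,x)\cdot\nabla\varphi$, which is precisely the transport term appearing on the left of \eqref{eq:nuovamagnetico}.

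Next I would expand the spatial magnetic Laplacian. From $\nabla\varphi=R^{t}\nabla u$ one gets $D^{2}\varphi=R^{t}(D^{2}u)R$ and hence $\Delta\varphi=\mathrm{tr}(RR^{t}D^{2}u)(Rx,g)$, which produces the first bracket on the right-hand side of \eqref{eq:nuovamagnetico}. With $\widetilde A(x,t)=R^{t}A(Rx,g)$, a short index computation gives $\widetilde A\cdot\nabla\varphi=(RR^{t}A)\cdot\nabla u$, $\mathrm{div}\,\widetilde A=R_{ki}R^{t}_{ij}\partial_{k}A_{j}$, and $|\widetilde A|^{2}=|R^{t}A|^{2}$, all evaluated at $(Rx,g)$. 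Substituting these into $\Delta_{\widetilde A}\varphi=\Delta\varphi-2i\widetilde A\cdot\nabla\varphi-i(\mathrm{div}\,\widetilde A)\varphi-|\widetilde A|^{2}\varphi$, and comparing with $g'\Delta_A u=g'\Delta u-2ig'A\cdot\nabla u-ig'(\mathrm{div}\,A)u-g'|A|^{2}u$, produces the remaining three brackets on the right-hand side of \eqref{eq:nuovamagnetico} as the mismatches between the factor $g'$ and the $RR^{t}$-contractions coming from the change of variables.

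Assembling the three steps yields $i\partial_t\varphi-\Delta_{\widetilde A}\varphi+\widetilde V\varphi-i(R^{-1}\dot Rx)\cdot\nabla\varphi=g'\Delta_A u-\Delta_{\widetilde A}\varphi$, and the termwise comparison above identifies this quantity with the right-hand side of \eqref{eq:nuovamagnetico}. The main obstacle is purely bookkeeping: $\Delta_A$ has four constituents and each one must be traced separately through the coordinate change, paying attention to whether the rotation hits $A(Rx,g)$ through the explicit $R^{t}$ factor in $\widetilde A$ or through inner differentiation of $A(R\,\cdot,g)$ in $x$. Specialising to orthogonal $R$ with $g'(t)\equiv 1$ makes all four defects vanish, thereby recovering the classical Larmor-frame transformation and confirming the statement in the model case which will be used in the applications.
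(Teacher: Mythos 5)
Your proposal is correct and is exactly the paper's route: the paper states this lemma is proved "by direct computation," and your chain-rule bookkeeping (time derivative plus the equation for $u$ to absorb $\widetilde V$, $\nabla\varphi=R^{t}\nabla u$ giving the transport term and $\Delta\varphi=\mathrm{tr}(RR^{t}D^{2}u)$, then termwise comparison of $g'\Delta_A u$ with $\Delta_{\widetilde A}\varphi$) reproduces precisely the four defect brackets in \eqref{eq:nuovamagnetico}. Your closing observation that orthogonal $R$ with $g'\equiv1$ kills all four defects is also exactly how Corollary \ref{cor:LemmaRotante} follows.
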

\begin{corollary}\label{cor:LemmaRotante}
  In the assumptions of Lemma \ref{lem:LemmaRotante}, if $RR^{t}\equiv I$ and $g(t)=t$, 
then \eqref{eq:nuovamagnetico} reads
\begin{equation*}
i \partial_t \varphi - \Delta_{\widetilde A} \varphi 
		+\widetilde V(x,t) \, \varphi
		- i R^{t} \dot Rx \cdot \nabla \varphi 
		= 0.
\end{equation*}
and $\varphi$ is also solution to
\begin{equation*}
i \partial_t \varphi - \Delta_{\bar A} \varphi 
			+ \widetilde V(x,t) \, \varphi
		+ \frac{R^t \dot R x}{2}\cdot
                \left(\widetilde A(x,t)+ \bar A(x,t)\right)
		= 0.
\end{equation*}	
with $\widetilde V,\widetilde A$ defined in \eqref{eq:defnVLemmaRotante} and 
\begin{equation*}
\bar A (x,t)= \widetilde A (x,t) - \frac{R^t \dot R}{2}x.
\end{equation*}
\end{corollary}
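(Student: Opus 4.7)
My approach would be direct verification in two stages, using only that $R$ is orthogonal and that $g'\equiv 1$.

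\emph{Step 1.} I would first show that, under the hypotheses $RR^t\equiv I$ and $g(t)=t$, every bracket on the right-hand side of \eqref{eq:nuovamagnetico} vanishes pointwise. Since $g'=1$ and $\operatorname{tr}(RR^tD^2u)=\operatorname{tr}(D^2u)=\Delta u$, the kinetic bracket is zero. Summing over $i$, the identity $R_{ki}R^t_{ij}=(RR^t)_{kj}=\delta_{kj}$ collapses $R_{ki}R^t_{ij}\partial_kA_j$ to $\operatorname{div}A$, killing the divergence bracket. Analogously $RR^tA=A$ annihilates the gradient-coupling bracket and $|R^tA|^2=A^tRR^tA=|A|^2$ the potential bracket. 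This yields the first displayed equation.

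\emph{Step 2.} For the second equation I would set $B:=R^t\dot R$ and exploit that differentiating $R^tR=I$ gives $\dot R^tR+R^t\dot R=0$, so $B$ is antisymmetric; in particular $\operatorname{tr}B=0$ and $\operatorname{div}(Bx)=0$. With $\bar A=\widetilde A-\tfrac12 Bx$ this forces $\operatorname{div}\bar A=\operatorname{div}\widetilde A$, and
\begin{equation*}
|\bar A|^2=|\widetilde A|^2-\widetilde A\cdot Bx+\tfrac14|Bx|^2,\qquad \bar A\cdot\nabla\varphi=\widetilde A\cdot\nabla\varphi-\tfrac12 Bx\cdot\nabla\varphi.
\end{equation*}
Inserting these into $\Delta_{\bar A}\varphi=\Delta\varphi-2i\bar A\cdot\nabla\varphi-i(\operatorname{div}\bar A)\varphi-|\bar A|^2\varphi$ and comparing with the analogous expansion of $\Delta_{\widetilde A}\varphi$ gives
\begin{equation*}
\Delta_{\bar A}\varphi-\Delta_{\widetilde A}\varphi=iBx\cdot\nabla\varphi+\widetilde A\cdot Bx\,\varphi-\tfrac14|Bx|^2\varphi.
\end{equation*}
A one-line algebraic identity, $\widetilde A\cdot Bx-\tfrac14|Bx|^2=\tfrac12 Bx\cdot(\widetilde A+\bar A)$, immediate after writing $\widetilde A=\bar A+\tfrac12 Bx$, rewrites the above as $iBx\cdot\nabla\varphi+\tfrac12 Bx\cdot(\widetilde A+\bar A)\varphi$. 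Substituting the resulting relation $-\Delta_{\widetilde A}\varphi-iBx\cdot\nabla\varphi=-\Delta_{\bar A}\varphi+\tfrac12 Bx\cdot(\widetilde A+\bar A)\varphi$ into the equation of Step~1 produces the second displayed equation.

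The only delicate point is the antisymmetry of $B=R^t\dot R$: without it $\operatorname{div}(Bx)\ne 0$, so $\operatorname{div}\bar A\ne\operatorname{div}\widetilde A$, and the transport term $-iBx\cdot\nabla\varphi$ could not be absorbed cleanly into a modified magnetic Laplacian. Everything else is bookkeeping.
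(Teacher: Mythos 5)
Your proposal is correct and is essentially the paper's own argument: a direct computation in which the only point requiring care is that $\operatorname{div}(R^t\dot R\,x)=0$, so that $\operatorname{div}\bar A=\operatorname{div}\widetilde A$ when completing the square in the magnetic Laplacian. You justify this via the antisymmetry of $R^t\dot R$ obtained by differentiating $R^tR=I$, while the paper invokes Jacobi's formula and the constancy of $\det R$ — two equivalent ways of seeing the same trace-zero fact.
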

\begin{proof}[Proof of Corollary \ref{cor:LemmaRotante}]
The proof is the direct computation. It can be useful to remind that,
thanks to \emph{Jacobi's formula}, we have
\begin{equation*}
div(R^t \dot R x)=tr(R^{t}\dot R)=tr(R^{-1}\dot R)=\frac{(\det R)'}{\det R}=0
\end{equation*}
since $\det R(t)\equiv 1$ or $\det R(t)\equiv -1$.
\end{proof}
\begin{remark}
  It is useful to read the 
  change of variables in Lemma \ref{lem:LemmaRotante} and Corollary \ref{cor:LemmaRotante} from a  physical point of view:
   we are changing coordinate system, passing from one in rest
  to a rotating non-inertial one, thus introducing  fictitious forces.
\end{remark}
\begin{remark}\label{rmk:solonoi}
  We remark that the change of variables considered in this Section
  are of the form
  \begin{equation*}
    \tilde t= \tilde t (t), \quad
    \tilde x_j = a(t) R_{jk}(t) x_k + S_j(t),
  \end{equation*}
  where $\tilde t \colon \R \to \R$, $a \colon \R \to \R$, 
  $R \colon \R \to \R^{n\times n}$, $R(t)^t R(t) = Id$. 
  These are the only coordinate transformations under which
  the Schr\"odinger equation  is covariant. 
  For further 
  details, we remand to Section 5 in \cite{takagi1}.

We also
  remark that the changes of variables in this section can be
  composed, hence general
  electric potentials 
  $V(x,t)=a(t)\abs{x}^2 + b(t)\cdot x + c(t)$,
  for $a,c\colon \R \to R$, $b \colon \R \to \R^n$
  and linear magnetic potentials $A=M(t)x$ for 
  $M\colon \R \to \R^{n\times n}$ can be considered.
\end{remark}


\section{Proof of Theorem \ref{thm:UEtheorem}}
We perform the following change of variables, immediate consequence of
Lemma \ref{lem:acceleratore}: the proof is omitted.
\begin{proposition}
  Let $u \in \mathcal C([0,T];L^2(\R^n))$ be a solution to
  \begin{equation*}
    i \partial_t u -\Delta_A u +V(x,t) u +\left(E(t)\cdot x\right) u + k(t) u =0,
  \end{equation*}
  with $A=A(x,t):\R^n\times\R \to \R^n$, $V=V(x,t):\R^n\times\R \to \C$, $E \in C([0,T];\R^n)$,
  $k \in \mathcal C([0,T];\R)$, 
  and set 
\begin{equation*}
\varphi(x,t)={\exp}
			\Bigl[ i\int_0^t k(\tau)\,d\tau +
			i \frac{\dot S(t)}{2} \cdot x 
			+ i \int_0^t \Bigl(\frac{\dot S(\tau)^2}{4}
			                  - E(\tau)\cdot S(\tau) \Bigr)
					\, d\tau
			\Bigr]
		   u(x+S(t),t),	
\end{equation*}
for $S\in\mathcal C^2([0,T];\R^n)$ defined as
\begin{equation*}
  S(t):=-2\left(\int_0^t \int_0^s E(\tau)\,d\tau ds-
    \frac{t}{T}
    \int_0^T \int_0^s E(\tau)\,d\tau ds 
  \right).
\end{equation*}

Then $\varphi \in \mathcal C([0,T];L^2(\R^n))$ and it is solution to
  \begin{equation*}
    i\partial_t \varphi - \Delta_{\widetilde A} \varphi + \widetilde V \varphi = 0,
  \end{equation*}
  where $\widetilde A$ and $\widetilde V$ verify 
(HM) and (HE)
respectively, and
\begin{equation*}
\begin{split}
\widetilde A(x,t)& = A(x+S(t),t), \\
\widetilde V (x,t) &= V(x+S(t),t) + \dot S(t) \cdot \widetilde A (x,t).
\end{split}
\end{equation*}
\end{proposition}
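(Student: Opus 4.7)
The plan is to compose the two transformations from Lemma \ref{lem:acceleratore} and the preceding phase-absorption lemma. First, I would absorb the scalar term $k(t)u$ via the multiplicative phase factor $\exp\bigl[i\int_0^t k(\tau)\,d\tau\bigr]$, which (being unimodular and independent of $x$) simply removes $k(t)u$ from the equation without touching the magnetic term, the potential $V$, or the linear-in-$x$ term. The resulting function solves an equation of the form treated by Lemma \ref{lem:acceleratore}.

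Next, I would apply Lemma \ref{lem:acceleratore} with the specific $S$ given in the statement. Lemma \ref{lem:acceleratore} produces an equation for $\varphi$ with a residual linear term $\bigl(E(t)+\tfrac{\ddot S(t)}{2}\bigr)\cdot x\,\varphi$; hence I must choose $S$ so that $\ddot S(t)=-2E(t)$ on $[0,T]$. Among the two-parameter family of solutions I would pick the one satisfying the boundary conditions $S(0)=S(T)=0$: this is precisely
\begin{equation*}
  S(t)=-2\left(\int_0^t\!\!\int_0^s E(\tau)\,d\tau\,ds - \frac{t}{T}\int_0^T\!\!\int_0^s E(\tau)\,d\tau\,ds\right),
\end{equation*}
as one sees by integrating $\ddot S=-2E$ twice and fixing the two constants from $S(0)=S(T)=0$. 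The reason for these boundary conditions (rather than, say, $S(0)=\dot S(0)=0$) is that the problem of interest requires two-ended weighted $L^2$ control: since the translation $x\mapsto x+S(t)$ is the identity at $t=0$ and $t=T$, the weighted norms $\|e^{|\cdot|^2/\beta^2}u(\cdot,0)\|_{L^2}$ and $\|e^{|\cdot|^2/\alpha^2}u(\cdot,T)\|_{L^2}$ translate directly (up to the modulus-one phase) to the analogous norms on $\varphi$ at $t=0,T$. This preservation at the endpoints is what makes the change of variables useful for the application to Theorem \ref{thm:UEtheorem}.

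Finally, I would verify that $\widetilde A$ and $\widetilde V$ fall under hypotheses (HM) and (HE). For (HM), $\widetilde A(x,t)=A(x+S(t))$ has magnetic field $\widetilde B(x,t)=B(x+S(t))$, and from $x^t\widetilde B(x,t)=(x+S(t))^t B(x+S(t))-S(t)^t B(x+S(t))$ one gets $\|x^t\widetilde B\|_{L^\infty}<\infty$ since $S$ and $B$ are bounded on $[0,T]$ and $x^t B$ is bounded; the degenerate direction $\xi$ is unchanged by translation. For (HE), I would split $\widetilde V=\widetilde V_1+\widetilde V_2$ with $\widetilde V_1(x,t):=V_1(x+S(t))+\dot S(t)\cdot\widetilde A(x,t)$ (real and bounded) and $\widetilde V_2(x,t):=V_2(x+S(t),t)$. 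The Gaussian weight condition on $\widetilde V_2$ follows from the translation-invariance of the $L^\infty$ norm combined with $\sup_{t\in[0,T]}|S(t)|<\infty$: this at worst modifies the weight constants $\alpha,\beta$ in (HE) by harmless bounded factors, which is admissible.

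The main conceptual obstacle is really the choice of boundary conditions for $S$: once one recognizes that the two-point weighted problem forces $S(0)=S(T)=0$ (rather than an initial-value prescription), everything else is a direct composition/verification. The only technical subtlety is the change in the weight constants in (HE) induced by the translation, which is absorbed into the admissible class of potentials.
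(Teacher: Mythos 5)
Your proposal coincides with the paper's (omitted) proof: the proposition is exactly the composition of the phase lemma for $k(t)$ with Lemma \ref{lem:acceleratore}, where $S$ is chosen as the unique solution of $\ddot S(t)=-2E(t)$ with $S(0)=S(T)=0$ (which is precisely the $S$ in the statement), so that the residual term $\bigl(E+\tfrac{\ddot S}{2}\bigr)\cdot x\,\varphi$ vanishes and the translation is the identity at $t=0,T$, preserving the two endpoint weighted norms. The only inaccuracy is in your check of (HM), where you invoke boundedness of $B$ itself, which (HM) does not provide (only $\|x^tB\|_{L^\infty}<\infty$ is assumed); this point is asserted without proof in the paper as well and does not touch the main computation.
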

We observe that
\begin{equation*}
  \begin{split}
    & \norma*{e^{|\cdot|^2/\beta^2}u(\cdot,0)}_{L^2}
    = \norma*{e^{|\cdot|^2/\beta^2}\varphi(\cdot,0)}_{L^2} \\
    & \norma*{e^{|\cdot|^2/\alpha^2}u(\cdot,T)}_{L^2}
    = \norma*{e^{|\cdot|^2/\alpha^2}\varphi(\cdot,T)}_{L^2}.
  \end{split}
\end{equation*}
We conclude thanks to Theorem 1.3 in \cite{CF} for the case $n\geq 3$ and $A\neq 0$, and Theorem 1 in \cite{EKPV3}
for the case $n\geq1$ and $A\equiv 0$.

\section{Proofs of Theorems \ref{thm:QHOtheorem} and \ref{thm:QHStheorem}}
The two proofs largely overlap, hence they will be given together.
The proofs are divided into three steps.
  
\subsection{Cr\"onstrom gauge}\label{sec:cronstrom}
The first step consists in reducing to the Cr\"onostrom gauge 
\begin{equation*}
  x\cdot A(x)=0 \quad \text{ for all }x \in  \Rn,
\end{equation*} 
by means of the following result.

\begin{lemma}\label{lem:cronstrom1}
  Let $A=A(x)=(A^1(x),\dots,A^n(x)):\R^n\to\R^n$, for $n\geq2$
 and denote by $B=DA-DA^t\in \mathcal M_{n\times
n}(\R)$, $B_{jk}=A^k_j-A^j_k$, and $\Psi(x):=x^tB(x)\in\R^n$. 
Assume that the two vector quantities
  \begin{equation}\label{eq:cronstrom1}
    \int_0^1A(sx)\,ds\in\R^n,
    \qquad
    \int_0^1\Psi(sx)\,ds\in\R^n
  \end{equation}
  are finite, for almost every $x\in\R^n$; moreover, define the (scalar) function
  \begin{equation*}
    \varphi(x):=x\cdot\int_0^1A(sx)\,ds\in\R.
  \end{equation*}
  Then, the following two identities hold:
  \begin{align*}
    &\widetilde A(x):=A(x)-\nabla\varphi(x)  = -\int_0^1\Psi(sx)\,ds
    \\
    &x^tD\widetilde A(x)   = -\Psi(x) +\int_0^1\Psi(sx)\,ds.
  \end{align*}
\end{lemma}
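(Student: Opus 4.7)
The plan is to prove both identities by direct computation, using nothing beyond the fundamental theorem of calculus applied to the parameter $s\in[0,1]$ and the antisymmetry of $B=DA-DA^t$.

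For the first identity, I would differentiate $\varphi$ under the integral sign to obtain
\begin{equation*}
  \partial_k \varphi(x) = \int_0^1 A^k(sx)\,ds + \sum_{j=1}^n x_j \int_0^1 s\,(\partial_k A^j)(sx)\,ds,
\end{equation*}
and, independently, rewrite $A^k(x)$ via the telescopic identity
\begin{equation*}
  A^k(x) = \int_0^1 \frac{d}{ds}\bigl[s\,A^k(sx)\bigr]\,ds = \int_0^1 A^k(sx)\,ds + \sum_{j=1}^n x_j\int_0^1 s\,(\partial_j A^k)(sx)\,ds.
\end{equation*}
Subtracting the two expressions the zeroth-order piece cancels and one is left with the antisymmetric combination $\partial_j A^k - \partial_k A^j = B_{jk}$; absorbing the factor $s$ into $sx_j$ inside the integrand, the bracket $s\sum_j x_j B_{jk}(sx)$ becomes the $k$-th component of $(sx)^tB(sx)=\Psi(sx)$, and the first identity follows (up to the overall sign dictated by the convention $\Psi=x^tB$).

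For the second identity, I would take the formula for $\widetilde A$ just obtained and contract $x$ with $D\widetilde A$: differentiating under the integral produces an extra factor $s$ and a gradient in $\Psi_k$, and the chain rule
\begin{equation*}
  \sum_{j=1}^n x_j (\partial_j \Psi_k)(sx) = \frac{d}{ds}\bigl[\Psi_k(sx)\bigr]
\end{equation*}
turns the whole contraction into a derivative in $s$. A single integration by parts,
\begin{equation*}
  \int_0^1 s\,\frac{d}{ds}\Psi_k(sx)\,ds = \Psi_k(x) - \int_0^1 \Psi_k(sx)\,ds,
\end{equation*}
then produces precisely $-\Psi(x)+\int_0^1 \Psi(sx)\,ds$.

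There is no genuine obstacle beyond careful bookkeeping of signs (tied to the antisymmetry $B^t=-B$ and the convention for $\Psi=x^tB$) and minimal regularity of $A$ to justify differentiation under the $s$-integral; the hypothesis that the vector integrals in \eqref{eq:cronstrom1} are finite is exactly what is needed for $\widetilde A$ and $\nabla\varphi$ to be well defined almost everywhere in $\R^n$. Conceptually, the lemma is a quantitative form of the Poincar\'e lemma: $\varphi$ is the standard radial antiderivative realizing the Cr\"onstrom gauge, and the computation tracks explicitly the residual radial component of $\widetilde A$ in terms of the magnetic field $B$.
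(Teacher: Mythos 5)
Your argument is correct, and it is essentially the proof the paper points to: the paper itself gives no proof of Lemma \ref{lem:cronstrom1}, deferring to \cite{I} and to Lemma 2.2 of \cite{BFGRV}, and those arguments are exactly the ray computation you perform (fundamental theorem of calculus in $s$, differentiation under the integral, one integration by parts in $s$). One substantive remark on the sign you leave hedged: with the conventions as literally stated, $B_{jk}=\partial_{x_j}A^k-\partial_{x_k}A^j$ and $\Psi=x^tB$, your computation gives $\widetilde A(x)=+\int_0^1\Psi(sx)\,ds$ and consequently $x^tD\widetilde A(x)=\Psi(x)-\int_0^1\Psi(sx)\,ds$; testing on $A=\frac b2(-x_2,x_1)$ (already in the Cr\"onstrom gauge, so $\varphi\equiv0$, $\Psi=2A$) confirms these are the correct signs, the minus signs in the statement corresponding to the opposite convention $\Psi=Bx$. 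This is immaterial for the sequel, where only $x\cdot\widetilde A\equiv0$, $x\cdot x^tD\widetilde A\equiv0$ and the bound $\norma{\widetilde A}_{L^\infty}\le\norma{x^tB}_{L^\infty}$ are used, so your hedge is warranted and not a gap. Your closing caveat about regularity is also apt: the second identity differentiates $\Psi$, so under the mere $\mathcal C^{1,\varepsilon}_{\mathrm{loc}}$ assumption in (HM) it should be read in the same formal/almost-everywhere sense in which the paper uses it, or justified by mollifying $A$ and passing to the limit in the first identity.
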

\begin{remark}\label{rem:cronstrom}
Notice that
  \begin{equation}\label{eq:gaugefinal}
    x\cdot\widetilde A(x) \equiv0,
    \qquad
    x\cdot x^tD\widetilde A(x)\equiv0.
  \end{equation}
  From now on, we will hence assume, without loss of generality, 
  that \eqref{eq:gaugefinal} are satisfied by $A$.
  Observe moreover that assumption \eqref{hypo:xi} in Theorem \ref{thm:QHOtheorem} 
  is preserved by the above gauge transformation, and we have in addition that $A\cdot\xi\equiv0$. 
We also remark that
\begin{equation*} 
\norma{\tilde A}_{L^\infty}^2 +   \norma{x^t B}_{L^\infty}^2 < +\infty. 
\end{equation*}
Finally notice that the first condition in \eqref{eq:cronstrom1} is guaranteed
by the assumption $A\in \mathcal C^{1,\varepsilon}_{\text{loc}}$ in {(HM)} in the Introduction.

We mention \cite{I} for the proof of the previous Lemma; see alternatively Lemma 2.2 in \cite{BFGRV}.
\end{remark}

\subsection{Removing the harmonic oscillator}
The second step consists in reducing the proof to the case of the equation without 
harmonic oscillator, by means of the appropriate change of variables, as 
shown in the following propositions, immediate consequences of 
Lemma \ref{lem:HOcambio}. We omit the proofs. 
\begin{proposition}
  Let $u \in \mathcal C([0,T];L^2(\R^n))$ be a solution to
  \begin{equation*}
    i \partial_t u -\Delta_A u +V(x,t) u +\frac{\omega^2}{4}|x|^2 u =0,
  \end{equation*}
  with $A=A(x,t):\R^n\times\R \to \R^n$, $V=V(x,t):\R^n\times\R \to \C$, $0<\omega<\pi/2T$,
  and set 
  \begin{equation*}
    \varphi(x,t):= (1+\omega^2 t^2)^{-\frac{n}{4}} 
                   \exp\left[-\frac{i\omega^2 t}{4+4\omega^2 t^2}|x|^2\right]
         	   u\left( \frac{x}{\sqrt{1+\omega^2 t^2}}, \frac{\arctan{\omega t}}{\omega} 
		   \right).
  \end{equation*}
  Then $\varphi \in \mathcal C([0,\tan(\omega T)/\omega];L^2(\R^n))$ and it is solution to
  \begin{equation*}
    i\partial_t \varphi - \Delta_{\widetilde A} \varphi + \widetilde V \varphi = 0,
  \end{equation*}
  where 
  \begin{align}
      \widetilde A(x,t)& = \frac{1}{\sqrt{1+\omega^2 t^2}} 
      A\left( \frac{x}{\sqrt{1+\omega^2 t^2}}, \frac{\arctan{\omega t}}{\omega} \right),\notag \\
      \widetilde V (x,t) &= \frac{1}{1+\omega^2 t^2} 
      V\left( \frac{x}{\sqrt{1+\omega^2 t^2}}, \frac{\arctan{\omega t}}{\omega}\right) 
      -  \frac{\omega^2 t}{1+\omega^2 t^2} x  \cdot \widetilde A (x,t). \label{eq:defnVPositive}
   \end{align}

  Moreover for all $t \in [0,T]$
  \begin{equation*}
    u(x,t)=(\cos{\omega t})^{-\frac{n}{2}}
    e^{i\tan{\omega t}\frac{\omega |x|^2 }{4}}
  \varphi\left(
  \frac{x}{\cos{\omega t}}, \frac{\tan{\omega t}}{\omega}
  \right).
\end{equation*}
\end{proposition}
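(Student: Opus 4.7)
The statement is just a specialization of the comoving-frame Lemma \ref{lem:HOcambio} to the constant frequency $h(t)\equiv\omega^{2}$ with the distinguished choice
$$a(t):=\sqrt{1+\omega^{2}t^{2}}.$$
The plan is therefore: (i) fit the change of variables from Lemma \ref{lem:HOcambio} with this $a(t)$ to the formula given in the proposition; (ii) check that the transformed equation has no residual quadratic potential; (iii) handle the time-interval bookkeeping; (iv) invert the substitution.

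First, I would compare with the ansatz in Lemma \ref{lem:HOcambio}: with $a(t)=\sqrt{1+\omega^{2}t^{2}}$, one has $a^{-n/2}=(1+\omega^{2}t^{2})^{-n/4}$, $\dot a/(4a)=\omega^{2}t/(4(1+\omega^{2}t^{2}))$, and $\int_{0}^{t}a(\tau)^{-2}\,d\tau=\arctan(\omega t)/\omega$. These match exactly the prefactor, the phase, the spatial dilation, and the new time of the $\varphi$ defined in the statement. The formulas for $\widetilde A$ and $\widetilde V$ displayed in Lemma \ref{lem:HOcambio} then reduce directly to those in \eqref{eq:defnVPositive}, using $\dot a/a=\omega^{2}t/(1+\omega^{2}t^{2})$.

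Next, I would verify the cancellation of the harmonic oscillator term, which is the only computation requiring care. Differentiating gives $\ddot a=\omega^{2}(1+\omega^{2}t^{2})^{-3/2}$, so
$$\frac{\ddot a}{a}=\frac{\omega^{2}}{(1+\omega^{2}t^{2})^{2}}=\frac{\omega^{2}}{a^{4}}=\frac{\widetilde h(t)}{a^{4}},$$
since $\widetilde h(t)=h(\arctan(\omega t)/\omega)\equiv\omega^{2}$. Hence the coefficient $(\widetilde h/a^{4}-\ddot a/a)$ of $|x|^{2}/4$ in the equation supplied by Lemma \ref{lem:HOcambio} vanishes identically, which is precisely what identifies $\sqrt{1+\omega^{2}t^{2}}$ as the correct scaling for a quadratic trap of strength $\omega^{2}/4$.

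For the time interval, the hypothesis $\omega<\pi/(2T)$ gives $\omega T<\pi/2$, so $s(t):=\arctan(\omega t)/\omega$ is a smooth diffeomorphism from $[0,\tan(\omega T)/\omega]$ onto $[0,T]$; continuity of $\varphi$ into $L^{2}(\R^{n})$ on the former interval follows from that of $u$ on the latter because the spatial change $x\mapsto x/\sqrt{1+\omega^{2}t^{2}}$ is an isometric dilation up to the $a^{-n/2}$ factor. Finally, to obtain the last displayed identity expressing $u$ in terms of $\varphi$, I would set $s=\arctan(\omega t)/\omega$, so that $t=\tan(\omega s)/\omega$ and $1+\omega^{2}t^{2}=\sec^{2}(\omega s)$; substituting and simplifying yields $(\cos\omega s)^{-n/2}\exp[i\tan(\omega s)\,\omega|x|^{2}/4]\,\varphi(x/\cos(\omega s),\tan(\omega s)/\omega)$, exactly the stated inverse. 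No step is harder than the identity $\ddot a/a=\omega^{2}/a^{4}$, and once this is in hand the proposition reduces to bookkeeping.
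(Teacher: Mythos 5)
Your proposal is correct and is exactly the route the paper intends: the proposition is stated there as an immediate consequence of Lemma \ref{lem:HOcambio} (proof omitted), and you carry this out with the choice $a(t)=\sqrt{1+\omega^2t^2}$, verifying $a^3\ddot a=\omega^2$ so the quadratic term cancels, matching $\widetilde A,\widetilde V$ with \eqref{eq:defnVPositive}, and inverting via $t=\tan(\omega s)/\omega$. All computations check out, including the time-interval and $L^2$-continuity bookkeeping.
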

\begin{proposition}
  Let $v \in \mathcal C([0,T];L^2(\R^n))$ be a solution to
  \begin{equation*}
    i \partial_t v -\Delta_A v +V(x,t) v -\frac{\nu^2}{4}|x|^2 v =0,
  \end{equation*}
  with $A=A(x,t):\R^n\times\R \to \R^n$, $V=V(x,t):\R^n\times\R \to \C$, $0<\nu<1/T$,
  and set 
  \begin{equation*}
    \psi(x,t):= (1-\nu^2 t^2)^{-\frac{n}{4}} 
                   \exp\left[\frac{i\nu^2 t}{4-4\nu^2 t^2}|x|^2\right]
		   v\left( \frac{x}{\sqrt{1-\nu^2 t^2}}, \frac{\tanh^{-1}{\nu t}}{\nu} 
		   \right).
  \end{equation*}
  Then $\psi \in \mathcal C([0,\tanh(\nu T)/\nu];L^2(\R^n))$ and it is solution to
  \begin{equation*}
    i\partial_t \psi - \Delta_{\widetilde A} \psi + \widetilde V \psi = 0,
  \end{equation*}
  where 
  \begin{align}
      \widetilde A(x,t)& = \frac{1}{\sqrt{1-\nu^2 t^2}} 
      A\left( \frac{x}{\sqrt{1-\nu^2 t^2}}, \frac{\tanh^{-1}{\nu t}}{\nu} \right),\notag \\
      \widetilde V (x,t) &= \frac{1}{1-\nu^2 t^2} 
      V\left( \frac{x}{\sqrt{1-\nu^2 t^2}}, \frac{\tanh^{-1}{\nu t}}{\nu}\right) 
	+  \frac{\nu^2 t}{1-\nu^2 t^2} x  \cdot \widetilde A (x,t). \label{eq:defnVNegative}
   \end{align}

  Moreover for all $t \in [0,T]$
  \begin{equation*}
    v(x,t)=(\cosh{\nu t})^{-\frac{n}{2}}
    e^{i\tanh{\nu t}\frac{\nu |x|^2 }{4}}
  \psi\left(
  \frac{x}{\cosh{\nu t}}, \frac{\tanh{\nu t}}{\nu}
  \right).
\end{equation*}
\end{proposition}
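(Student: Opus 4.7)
The plan is to apply Lemma \ref{lem:HOcambio} directly, with constant $h(t) \equiv -\nu^2$, and to select the scale function $a(t)$ so that the residual harmonic term $(\widetilde h/a^4 - \ddot a/a)|x|^2/4$ in the transformed equation vanishes identically. Setting this coefficient to zero produces the ODE $a^3\ddot a = -\nu^2$, the hyperbolic analogue of $a^3\ddot a = +\omega^2$ used for the harmonic-oscillator proposition just above. The natural candidate here is
\[
a(t) = \sqrt{1-\nu^2 t^2},
\]
for which a direct differentiation yields $\dot a(t) = -\nu^2 t/\sqrt{1-\nu^2 t^2}$ and $\ddot a(t) = -\nu^2/(1-\nu^2 t^2)^{3/2} = -\nu^2/a(t)^3$, confirming that this choice annihilates the quadratic potential.

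With this $a(t)$ fixed, everything else reduces to substitution. The time reparametrization is
\[
\int_0^t a(\tau)^{-2}\,d\tau = \int_0^t \frac{d\tau}{1-\nu^2\tau^2} = \frac{\tanh^{-1}(\nu t)}{\nu},
\]
which maps $[0,\tanh(\nu T)/\nu]$ bijectively onto $[0,T]$; on this interval $\nu t \le \tanh(\nu T) < 1$, so $a$ stays strictly positive and the transformation is nonsingular. The Jacobi-type prefactor $a^{-n/2}$ evaluates to $(1-\nu^2 t^2)^{-n/4}$, and the phase factor $\exp[-i\dot a/(4a)|x|^2]$ of Lemma \ref{lem:HOcambio} equals $\exp[i\nu^2 t/(4-4\nu^2 t^2)|x|^2]$, so the definition of $\psi$ matches the lemma verbatim. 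Plugging into the formulas for $\widetilde A$ and $\widetilde V$ provided by the lemma, and using $-\dot a(t)/a(t) = \nu^2 t/(1-\nu^2 t^2)$, reproduces the expressions in \eqref{eq:defnVNegative}.

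To recover $v$ from $\psi$, I would simply invert the change of variables. Writing $s = \tanh^{-1}(\nu t)/\nu$ for the ``new'' time and $t$ for the ``old'' one, the identities $t = \tanh(\nu s)/\nu$, $a = 1/\cosh(\nu s)$ and $\dot a\, a = -\nu^2 t = -\nu\tanh(\nu s)$ convert the prefactor $a^{n/2}$ into $(\cosh \nu s)^{-n/2}$ and rewrite the quadratic phase as a scalar multiple of $|y|^2$. Relabelling then yields the closed-form representation of $v(x,t)$ claimed at the end of the statement.

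The whole argument is a routine chain-rule check once the ansatz $a(t) = \sqrt{1-\nu^2 t^2}$ has been identified; I do not foresee any conceptual obstacle. The only items demanding genuine care are (i) the verification that $a$ stays positive on the new time interval $[0,\tanh(\nu T)/\nu]$, which is automatic since $\tanh(\nu T) < 1$ (the stronger assumption $\nu < 1/T$ in Theorem \ref{thm:QHStheorem} is needed later, at the level of the Hardy-type conclusion, not here), and (ii) sign tracking in the quadratic phase, where the fact that $\dot a < 0$ — opposite to the harmonic oscillator case — is what produces the sign flip in the exponent relative to the confining analogue.
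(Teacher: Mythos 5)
Your route is exactly the one the paper intends (the paper omits the proof, presenting the proposition as an immediate consequence of Lemma \ref{lem:HOcambio}): take $h\equiv-\nu^{2}$, choose $a(t)=\sqrt{1-\nu^{2}t^{2}}$ so that $a^{3}\ddot a=-\nu^{2}$ annihilates the residual quadratic term, and then read off the time reparametrization $\tanh^{-1}(\nu t)/\nu$, the prefactor $a^{-n/2}$, the phase $-\dot a/(4a)=\nu^{2}t/(4(1-\nu^{2}t^{2}))$, and the expressions in \eqref{eq:defnVNegative} by direct substitution. All of this is correct, as is your observation that $a>0$ on $[0,\tanh(\nu T)/\nu]$ is automatic since $\tanh(\nu T)<1$.

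The one step you assert without carrying out is the final inversion, and there the sign does not come out as you claim. Using your own identity $\dot a\,a=-\nu^{2}t=-\nu\tanh(\nu s)$, the phase acquired when solving for $v$ is $\exp\bigl[i\tfrac{\dot a\,a}{4}|y|^{2}\bigr]=\exp\bigl[-i\tfrac{\nu\tanh(\nu s)}{4}|y|^{2}\bigr]$, so inverting the $\psi$ defined in the statement yields $v(x,t)=(\cosh\nu t)^{-n/2}\,e^{-i\tanh(\nu t)\frac{\nu|x|^{2}}{4}}\,\psi\bigl(x/\cosh(\nu t),\tanh(\nu t)/\nu\bigr)$, with a \emph{minus} sign in the exponent, whereas the formula at the end of the statement (which you claim to reproduce) carries a plus sign. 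This is precisely the hyperbolic sign flip coming from $\dot a<0$ that you flag, but it propagates to the inversion formula as well; in the confining analogue the printed definition and inversion are mutually consistent, while here they are not. The most plausible reading is that the statement's inversion formula has a sign typo (the definition of $\psi$, the cancellation of the quadratic potential, and \eqref{eq:defnVNegative} are all consistent with $a=\sqrt{1-\nu^{2}t^{2}}$), but then "relabelling yields the claimed formula" is not true verbatim, and a careful proof should record the corrected sign rather than assert agreement. Nothing else is affected: the Gaussian-decay identities used in the proof of Theorem \ref{thm:QHStheorem} involve only $|\psi|$ and are insensitive to this phase.
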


We remark that the second term at right hand side of \eqref{eq:defnVPositive} and 
\eqref{eq:defnVNegative} 
vanishes, thanks to \eqref{eq:gaugefinal}.
Moreover, the function $\widetilde V$  verifies assumptions (HE) 
  and $\widetilde A$ verifies assumptions (HM) in the Introduction.

\subsection{Conclusion of the proof}
We remark that
\begin{equation*}
  \begin{split}
    & \norma*{e^{|\cdot|^2/\beta^2}u(\cdot,0)}_{L^2}
    = 
    \norma*{e^{|\cdot|^2/\beta^2}\varphi(\cdot,0)}_{L^2} \\
    & \norma*{e^{|\cdot|^2/\alpha^2}u(\cdot,T)}_{L^2}
    =
    \norma*{e^{\cos^2(\omega T)|\cdot|^2/\alpha^2}\varphi\left(\cdot,\frac{\tan{\omega T}}{\omega}\right)}_{L^2}. \\
   & \norma*{e^{|\cdot|^2/\delta^2}v(\cdot,0)}_{L^2}
    = 
    \norma*{e^{|\cdot|^2/\delta^2}\psi(\cdot,0)}_{L^2} \\
    & \norma*{e^{|\cdot|^2/\gamma^2}v(\cdot,T)}_{L^2}
    =
    \norma*{e^{\cosh^2(\nu T)|\cdot|^2/\gamma^2}\psi\left(\cdot,\frac{\tanh{\nu T}}{\nu}\right)}_{L^2}.
    \end{split}
\end{equation*}
We conclude thanks to Theorem 1.3 in \cite{CF} for the case $n\geq 3$ and $A\neq 0$, and Theorem 1 in \cite{EKPV3}
for the case $n\geq 1$ and $A\equiv 0$.


\section{Proof of Theorem \ref{thm:UMtheorem}}

Analogously to what we have done in Section \ref{sec:cronstrom}, 
we reduce the problem to the Cr\"onstrom gauge, thanks to Lemma \ref{lem:cronstrom1}.
We remark that (with the notations of Lemma \ref{lem:cronstrom1})
\begin{equation*}
  \widetilde{A+C}=\widetilde A + C,
\end{equation*}
since $M$ is an anti-symmetric matrix. The Remark \ref{rem:cronstrom} is valid and
 we will omit the tildes in the following.

By means of the appropriate change of variables, we can suppress the magnetic potential $C$ in 
\eqref{eq:UMequation}.
\begin{proposition}
  Let $u \in \mathcal C([0,T];L^2(\R^n))$, be a solution to
  \begin{equation*}
    i \partial_t u -\Delta_{A+C} u +V(x,t) u =0,
  \end{equation*}
  with $A=A(x,t):\R^n\times\R \to \R^n$, $V=V(x,t):\R^n\times\R \to \C$, $C=M x/2$, with
  $M\in \R^{n\times n}$, $M^t=-M$. Set 
  \begin{equation*}
    \varphi(x,t):= u\left( e^{Mt}x, t
		   \right).
  \end{equation*}
  Then $\varphi \in \mathcal C([0,T];L^2(\R^n))$ and it is solution to
  \begin{equation*}
    i\partial_t \varphi - \Delta_{\widetilde A} \varphi + \widetilde V \varphi + \frac{|Mx|^2}{4}\varphi= 0,
  \end{equation*}
  where 
  \begin{equation*}
    \widetilde A(x,t) = e^{-Mt} A\left( e^{Mt}x,t \right), 
    \qquad
    \widetilde V (x,t) = V\left( e^{Mt}x, t\right).
   \end{equation*}
\end{proposition}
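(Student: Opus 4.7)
The plan is a direct computation via the chain rule, in the spirit of Lemma~\ref{lem:LemmaRotante} and Corollary~\ref{cor:LemmaRotante}, since the change of variables $\varphi(x,t) = u(R(t)x, t)$ with $R(t) := e^{Mt}$ is exactly a time-dependent rotation. The two structural facts I exploit throughout are: (i) $R(t)R(t)^t = I$, because $M^t = -M$ implies $(e^{Mt})^t = e^{-Mt} = R(t)^{-1}$; and (ii) $R(t) M = M R(t)$, so in particular $\dot R = MR$ and $R^t M R = M$. In addition, $M^t M = b^2 I$ gives $|My|^2 = b^2 |y|^2$ for every $y \in \R^n$.

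First I compute the kinematic pieces: by orthogonality of $R$, $\Delta_x \varphi(x,t) = \Delta_y u(y,t)|_{y=Rx}$, and the time derivative gives $\partial_t \varphi = My \cdot \nabla_y u + \partial_s u$ evaluated at $(y,s) = (Rx, t)$. Then I substitute the original equation $i\partial_s u = \Delta_{A+C} u - V u$ and expand the combined magnetic Laplacian. Since $\dive C = \tfrac12 \operatorname{tr} M = 0$ by antisymmetry of $M$, one has
\begin{equation*}
\Delta_{A+C} u = \Delta_A u - 2i C \cdot \nabla_y u - 2 A \cdot C\, u - |C|^2 u.
\end{equation*}
The decisive cancellation is that $2 C(y) = My$ at $y = Rx$, so the term $-iMy \cdot \nabla_y u$ produced by the magnetic expansion exactly annihilates the chain-rule contribution $iMy \cdot \nabla_y u$ coming from $\partial_t\varphi$.

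What remains on the right-hand side is $\Delta_A u(Rx,t) - 2 A(Rx) \cdot C(Rx)\, u - |C(Rx)|^2 u - \widetilde V \varphi$. For the magnetic part, a short covariance check using $RR^t = I$ yields $\Delta_A u(Rx,t) = \Delta_{\widetilde A} \varphi(x,t)$ with $\widetilde A(x,t) = R^t A(Rx,t)$; this is the rotational analogue of the well-known fact that magnetic Laplacians transform covariantly under isometries. For the quadratic term, $|C(Rx)|^2 = |MRx|^2/4 = |Mx|^2/4$, using $M^t M = b^2 I$ together with $R^t R = I$. This is precisely how the harmonic-oscillator potential $\tfrac14 |Mx|^2 \varphi$ emerges from the change of frame, producing the claimed equation.

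The main obstacle I expect is bookkeeping, and in particular handling the cross-term $2 A(Rx) \cdot C(Rx)\, u = \widetilde A(x,t) \cdot Mx\,\varphi$, which is not obviously zero. I anticipate it vanishes thanks to the Cr\"onstrom gauge (applied just above to the combined potential $\widetilde{A+C} = \widetilde A + C$) combined with the integral representation of $\widetilde A$ in Lemma~\ref{lem:cronstrom1} and the constraint $M^t M = b^2 I$, using the antisymmetry of $B$ in the expression $\widetilde A(x) = -\int_0^1 sx^t B(sx)\,ds$; failing that, it must be absorbed into an effective electric potential, which is delicate since $Mx$ grows linearly in $|x|$ and need not satisfy \textbf{(HE)} on its own. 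Verifying this cancellation (or absorption) cleanly is the subtle step of the argument.
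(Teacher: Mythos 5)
Your route is essentially the paper's: the authors prove this proposition by applying Lemma \ref{lem:LemmaRotante} and Corollary \ref{cor:LemmaRotante} with $\dot R=MR$, i.e.\ $R(t)=e^{Mt}$, and your chain-rule computation is that argument written out. All your kinematic identities are correct ($\Delta_x\varphi=\Delta u$, the covariance $\Delta_{\widetilde A}\varphi=(\Delta_A u)(e^{Mt}x,t)$, the cancellation of $iMy\cdot\nabla_y u$, and $|MRx|=|Mx|$, which in fact needs only $M^t=-M$ and $[M,e^{Mt}]=0$, not $M^tM=b^2Id$). The genuine gap is exactly the step you flag and postpone: your computation actually yields
\begin{equation*}
i\partial_t\varphi-\Delta_{\widetilde A}\varphi+\widetilde V\varphi+\frac{|Mx|^2}{4}\varphi+\bigl(Mx\cdot\widetilde A(x,t)\bigr)\varphi=0,
\end{equation*}
and the cancellation of $Mx\cdot\widetilde A$ that you hope to extract from the Cr\"onstrom gauge does not hold. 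The gauge gives $y\cdot A(y)\equiv0$ and $A(y)=\int_0^1 sB(sy)y\,ds$, so $My\cdot A(y)=\int_0^1 s\,y^tM^tB(sy)y\,ds$; since $M^tB$ is not antisymmetric, this need not vanish. Concretely, in $\R^4$ take $A(y)=f(y_1^2+y_2^2)(-y_2,y_1,0,0)$ with $f$ smooth and compactly supported, and $M$ block-diagonal with blocks $\bigl(\begin{smallmatrix}0&-b\\ b&0\end{smallmatrix}\bigr)$: then $A$ is in the Cr\"onstrom gauge, satisfies (HM) with $\xi=e_3$, and $M$ satisfies \eqref{eq:propM}, yet $My\cdot A(y)=b\,(y_1^2+y_2^2)f(y_1^2+y_2^2)\not\equiv0$. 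Nor can the term be absorbed, as you suspect: it is time-dependent and $|Mx\cdot\widetilde A|$ may grow linearly in $|x|$, so it fits neither the $V_1$ nor the $V_2$ part of (HE), and it is not of the form $E(t)\cdot x$ either.

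So your ``subtle step'' is not mere bookkeeping: as written, your proposal does not prove the stated equation, and the cancellation you anticipate cannot be proved from the stated hypotheses. To be fair, this is also a weak point of the paper itself: applying Corollary \ref{cor:LemmaRotante} to the full potential $A+C$ with $R^t\dot R=M$ produces the zero-order term $\frac{Mx}{2}\cdot(\widetilde{A+C}+\bar A)=Mx\cdot\widetilde A+\frac{|Mx|^2}{4}$, i.e.\ exactly the cross term you isolated, while the proposition's conclusion (with $\widetilde V(x,t)=V(e^{Mt}x,t)$ and no extra term) is accurate only under an additional hypothesis such as $Mx\cdot A(x,t)\equiv0$, or with a separate argument handling this unbounded remainder. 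Completing your proof therefore requires either adding such a hypothesis (and checking it survives the Cr\"onstrom gauge reduction) or a genuinely new idea to control $Mx\cdot\widetilde A$; neither is supplied by the tools you invoke.
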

\begin{proof}
  The proposition is a simple application of Lemma \ref{lem:LemmaRotante}
  and Corollary \ref{cor:LemmaRotante}: we choose $R \in C([0,T],\R^{n \times n})$ such that 
  $\dot R(t)= MR(t)$.
\end{proof}

We conclude the proof observing that, thanks to \eqref{eq:propM}, 
\begin{equation*}
  \abs{Mx}^2 = (M^tMx,x) = b^2 |x|^2,
\end{equation*}
whence the thesis, thanks to Theorem \ref{thm:QHOtheorem}.

\addcontentsline{toc}{section}{\refname}

\end{document}